\newcommand{\mc}{\mathbb{C}}
\newcommand{\C}{\mathbb{C}}
\newcommand{\N}{\mathbb{N}}
\newcommand{\fol}{\mathcal{F}}
\newcommand{\cald}{{\mathcal{D}}}
\def\picill#1by#2(#3)#4
\vfill\special{illustration #3 scaled #4}}}
\newtheorem{teo}{Theorem}[section]
\newtheorem{lemma}[teo]{Lemma}
\newtheorem{defnc}[teo]{Definition}
\begin{document}

\title[Separatrices and vector fields on $(\C^3,0)$]{$2$-dimensional Lie algebras and separatrices for vector fields on $(\C^3,0)$}

\author{Julio C. Rebelo \, \, \, \& \, \, \, Helena Reis}
\address{}
\thanks{}

\begin{abstract}
We show that holomorphic vector fields on $(\C^3,0)$ have separatrices provided that they are embedded in a rank~$2$
representation of a two-dimensional Lie algebra. In turn, this result enables us to show that
the second jet of a holomorphic vector field defined on a compact
complex manifold $M$ of dimension~$3$ cannot vanish at an isolated singular point provided that $M$ carries
more than a single holomorphic vector field.
\end{abstract}

\maketitle

\section{Introduction}

Consider a compact complex manifold $M$ of dimension~$3$ and denote by ${\rm Aut}\, (M)$
the group of holomorphic diffeomorphisms of $M$. It is well-known that ${\rm Aut}\, (M)$ is a
finite dimensional complex Lie group whose Lie algebra can be identified with $\mathfrak{X}\, (M)$, the space
of all holomorphic vector fields defined on $M$; see \cite{akhiezer}. In this paper, the following will be proved:

\vspace{0.2cm}

\noindent {\bf Theorem~A}. {\sl Consider a compact complex manifold $M$ of dimension~$3$ and assume that
the dimension of ${\rm Aut}\, (M)$ is at least~$2$. Let $Z$ be an element of $\mathfrak{X}\, (M)$ and suppose that
$p \in M$ is an isolated singularity of~$Z$. Then
$$
J^2 (Z) \, (p) \neq 0
$$
i.e., the second jet of $Z$ at the point $p$ does not vanish.}

\bigskip

Theorem~A is the most elaborate result in this paper. In terms of importance, however, Theorem~B below is a general local
result playing the central role in the proof of Theorem~A.

To state Theorem~B, we consider two holomorphic vector fields $X$ and $Y$ defined on a neighborhood $U$ of $(0,0,0) \in \C^3$ and
we assume that they
are not linearly dependent at every point of $U$. In other words, the distribution spanned by $X$ and $Y$ has rank~$2$
away from a proper analytic subset of $U$. We also suppose that $X$ and $Y$ generate a Lie algebra of dimension~$2$.
In other words, either
these vector fields commute or they satisfy the more general relation
$[X,Y]=c \, Y$, where $c \in \C^{\ast}$ and where the brackets stand for the commutator of two vector fields. Then we have:

\vspace{0.2cm}

\noindent {\bf Theorem~B}. {\sl Let $X$ and $Y$ be two holomorphic vector fields defined on a neighborhood $U$
of $(0,0,0) \in \C^3$ which are not linearly dependent on all of $U$. Suppose that $X$ and $Y$ vanish at the origin and that
one of the following conditions holds:
\begin{itemize}
  \item $[X,Y]=0$;
  \item $[X,Y]=c \, Y$, for some $c \in \C^{\ast}$.
\end{itemize}
Then there exists a germ of analytic curve $\mathcal{C} \subset \C^3$ passing through the origin and simultaneously
invariant under $X$ and $Y$.}

\bigskip

Having stated the main results obtained in this paper, it is convenient to place them in perspective with
respect to previous works. We shall begin with Theorem~B since it is essential novel ingredient leading
to the proof of Theorem~A.
For this, recall that a
singular holomorphic foliation of {\it dimension~$1$}\, on $(\C^n,0)$ is nothing but the foliation induced by the
local orbits of a holomorphic vector field having a singular set of codimension at least~$2$.
In the case of foliations having dimension~$1$, a {\it separatrix}\, is a {\it germ of}\, analytic curve passing
through the singular point and invariant under the foliation in question. In dimension~$2$, a remarkable theorem due to
Camacho and Sad \cite{casad} asserts that every holomorphic foliation on $(\C^2,0)$ possesses a separatrix; their paper then
completes a classical work by Briot and Bouquet.
Unfortunately, the existence of separatrices is no longer a general phenomenon once the dimension increases as shown
by Gomez-Mont and Luengo in \cite{gmont}. The paper \cite{gmont} also contains examples of foliations without separatrix
on a {\it singular surface}, an issue previously discussed in \cite{camacho2}. The examples provided in
\cite{gmont}, however, include singular surfaces realized as hypersurfaces of $\C^3$ and foliations realized by holomorphic vector fields.
A basic question motivated by Gomez-Mont and Luengo's examples and aiming at finding appropriate generalizations
of Camacho-Sad theorem concerns the existence of separatrices for vector fields
as those considered in Theorem~B. This type of question also appears in the work of Stolovitch, Vey, and Zung about
normal forms for abelian actions and invariant sets; see the survey \cite{stolovitchsurvey} and its reference list.
From this point of view, Theorem~B is satisfactory for $(\C^3,0)$. Finally, in the commutative case,
Theorem~B nicely complements the main result in \cite{rebeloreis} concerning the existence of separatrices for the
{\it codimension~$1$ foliation}\, spanned by~$X$ and~$Y$. The reader will also note that the analogue of \cite{rebeloreis}
in the case of affine actions is known to be false since the classical work of Jouanolou; see \cite{Joa}.

Concerning Theorem~A, it essentially constitutes
a partial answer to a question raised by Ghys long ago. This question is better formulated in the context
of {\it semi-complete}\, vector fields; see \cite{JR1} or Section~5. In fact, Ghys has asked whether or not
an isolated singular point $p$ of a semi-complete vector field $Z$ always satisfies $J^2 (Z) (p) \neq 0$.
The answer is known to be affirmative in dimension~$2$. Whereas no counterexample is known in higher dimension,
it appears to exist a consensus that an affirmative answer to Ghys question, say in dimension~$3$, cannot be
obtained without a comprehensive and long analysis involving, in particular, reduction of singularities theorems which are themselves
fairly complicated already in dimension~$3$. From this point of view, the
advantage of Theorem~A lies in the fact that it provides a shortcut to an affirmative statement, albeit this statement
is slightly weaker than the original conjecture. Nonetheless, Ghys question was motivated by the potential applications of
this type of result to problems about bounds for the dimension
of the automorphism group of compact complex manifolds. As far as this type of application is targeted, the reader will note that
Theorem~A is satisfactory since the additional assumption on which it relies can be assumed to hold without
loss of generality.

The discussion conducted in the paper is rather elementary and relies on well-known results. Besides fairly
standard facts concerning singular spaces, we also use Malgrange's celebrated theorem in \cite{malgrange}, a
recent work by Guillot \cite{adolfo} concerning certain ``singular'' Kato surfaces equipped with vector fields, some
basic knowledge of complex surfaces, and Milnor's fibration theorem along with some related material.

\section{Codimension~$1$ foliations and invariant curves}

Recall that a singular {\it holomorphic foliation of dimension~$1$}\, on $(\C^3,0)$ is, by definition, given by
the local orbits of a holomorphic vector field $Z$ whose singular set ${\rm Sing}\, (Z)$ has codimension~$2$ or greater.
Similarly, a {\it codimension~$1$ holomorphic foliation}\, on $(\C^3,0)$ is associated
with the distribution obtained through the kernel of a holomorphic $1$-form $\Omega =\alpha \, dx + \beta \, dy + \gamma \, dz$ satisfying
the Frobenius condition $\Omega \wedge d\Omega =0$ and having a
singular set ${\rm Sing}\, (\Omega)$ of codimension at least~$2$. In terms of notation,
foliations of dimension~$1$ will typically be denoted by ``$\fol$'' whereas ``$\cald$'' will stand for codimension~$1$ foliations.

Henceforth we shall consider the setting of Theorem~B. Hence there are holomorphic vector fields
$X$ and $Y$ defined on a neighborhood $U$ of $(0,0,0) \in \C^3$ which are not parallel at every point $p$ in $U$. These
vector fields are assumed either to commute or to satisfy the equation $[X,Y]=cY$, for some $c \in \C^{\ast}$ where $[X,Y]$
stands for the commutator of the two vector fields in question.
In particular, $X$ and $Y$ span a singular codimension~$1$ foliation denoted by $\cald$ whose singular set ${\rm Sing}\, (\cald)$ has
codimension at least~$2$.

To begin our approach to Theorem~B, note that ${\rm Sing}\, (\cald)$ is clearly invariant under both $X$ and $Y$. Thus
the statement of Theorem~B is immediately verified provided that
the analytic set ${\rm Sing}\, (\cald)$ has dimension~$1$. Therefore, throughout this section, we shall assume that
${\rm Sing}\, (\cald)$ is reduced to the origin if not empty. In other
words, the foliation $\cald$ is either regular or it has an isolated singular point at the origin. Now Malgrange's theorem
in \cite{malgrange} ensures that $\cald$ is given by the level surfaces of some holomorphic function $f : (\C^3, 0) \rightarrow
(\C,0)$. Summarizing, in order to prove Theorem~B, we can assume without loss of generality that the
following lemma holds:

\begin{lemma}
\label{firstreductionsTheoremB-1}
The codimension~$1$ foliation $\cald$ admits a non-constant holomorphic first integral $f : (\C^3, 0) \rightarrow (\C,0)$
(which is, in fact, a submersion) so that the leaves of $\cald$ coincide with the level sets of $f$. Furthermore,
the foliation $\cald$ is either regular or it has an isolated singular point at the origin.\qed
\end{lemma}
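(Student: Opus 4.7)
The plan is to reduce, by means of a dimension count on ${\rm Sing}\, (\cald)$, to the regime where Malgrange's first-integral theorem applies. The starting observation is that ${\rm Sing}\, (\cald)$, being the vanishing locus of the holomorphic $1$-form that induces $\cald$, is a proper analytic subset of $U$ of codimension at least $2$, and hence a germ of dimension at most $1$ at the origin. Moreover, since $X$ and $Y$ are tangent to $\cald$, their local flows preserve $\cald$ and in particular preserve its singular set, so ${\rm Sing}\, (\cald)$ is simultaneously $X$- and $Y$-invariant.

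The first step is to dispose of the case where the germ of ${\rm Sing}\, (\cald)$ at the origin has positive dimension. In that situation, any $1$-dimensional irreducible component passing through the origin is automatically a germ of analytic curve invariant under both $X$ and $Y$, and the conclusion of Theorem~B is immediate. Without loss of generality I may therefore assume that ${\rm Sing}\, (\cald)$ is either empty or reduced to the origin, which is precisely the ``furthermore'' assertion of the lemma: $\cald$ is either regular or has an isolated singular point at the origin.

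With this reduction in hand, ${\rm Sing}\, (\cald)$ has codimension at least $3$ in $\C^3$, which is exactly the hypothesis required by Malgrange's theorem in \cite{malgrange}. Invoking that theorem produces a non-constant holomorphic function $f : (\C^3, 0) \to (\C, 0)$ whose level sets are the leaves of $\cald$. In the regular case, the defining $1$-form of $\cald$ has no zeros and must be a non-vanishing multiple of $df$, so $df$ never vanishes and $f$ is automatically a submersion, confirming the parenthetical remark in the statement. The only point that needs care is that Malgrange's theorem is formulated under a codimension~$3$ condition on the singular set; this is precisely what the previous reduction provides, and once it is granted the remainder of the argument is formal.
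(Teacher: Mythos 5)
Your argument is correct and is essentially identical to the paper's own (the lemma's \qed refers to the paragraph preceding it, which makes exactly your reduction: ${\rm Sing}\,(\cald)$ is $X$- and $Y$-invariant of dimension at most $1$, a one-dimensional component already proves Theorem~B, and otherwise the codimension-$\geq 3$ singular set lets Malgrange's theorem produce the first integral). Your caveat that the submersion claim is only literally justified in the regular case is reasonable, since Malgrange's theorem alone does not force $df(0)\neq 0$; the paper itself treats $f^{-1}(0)$ as a possibly singular surface afterwards, so nothing downstream depends on that parenthetical.
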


To abridge notation, an analytic set of dimension~$2$ will be called an analytic surface (or simply a surface if no misunderstood
is possible). Similarly an analytic set of dimension~$1$ will be called an analytic curve (or simply curve).

Next consider the germ of analytic surface $S$ given by $S =f^{-1} (0)$ which is clearly invariant
under both~$X$ and~$Y$. This surface can be assumed to be {\it irreducible}\, otherwise two irreducible components
of it would intersect each other over a curve invariant under~$X$ and~$Y$ and hence satisfying the requirements
of Theorem~B. In any event, the subsequent discussion makes sense for every irreducible component $S$ of $f^{-1} (0)$.
In the sequel, the restrictions of~$X$ and~$Y$ to $S$ will be respectively denoted by $X_{\vert S}$ and by $Y_{\vert S}$.
Now, we have:

\begin{lemma}
\label{firstreductionsTheoremB-2}
Suppose that the restrictions $X_{\vert S}$ and $Y_{\vert S}$ of $X$ and $Y$ to $S$ are not everywhere parallel.
Then there exists a germ of analytic curve $\mathcal{C}\subset S$ invariant under both $X$ and $Y$.
\end{lemma}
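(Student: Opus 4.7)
The plan is to take for $\mathcal{C}$ an irreducible component (through the origin) of the vanishing locus of the bivector field $\omega := X|_S \wedge Y|_S$ on $S$. To set things up, I would first invoke Lemma~\ref{firstreductionsTheoremB-1}: since $f$ is a submersion, $S = f^{-1}(0)$ is a \emph{smooth} germ of complex surface at the origin, and hence $\Lambda^2 TS$ is an honest holomorphic line bundle on $S$. The hypothesis that $X|_S$ and $Y|_S$ are not everywhere parallel means precisely that $\omega$ is a nonzero section of this line bundle; meanwhile $X(0)=Y(0)=0$ forces $\omega(0)=0$. Because any nonzero holomorphic section of a line bundle on a smooth complex surface has zero locus of pure codimension~$1$, the set $C := \{\omega = 0\}$ is a nonempty analytic curve through the origin.

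The key step is then to show that $C$ is invariant under both $X$ and~$Y$. Since $X$ and $Y$ are tangent to $S$, their restrictions inherit the algebraic relations of the original pair, so $[X|_S, Y|_S]$ equals either $0$ or $c\, Y|_S$. Using the Leibniz rule for the Lie derivative on bivector fields, one computes $L_X \omega = [X,X]\wedge Y + X\wedge [X,Y] = \lambda_X\, \omega$ with $\lambda_X \in \{0,c\}$. Choosing a local nonvanishing section $\sigma$ of $\Lambda^2 TS$ on $S$ and writing $\omega = g\, \sigma$ for a holomorphic function $g$, the identity $L_X \omega = \lambda_X \omega$ translates to $X(g) = h\, g$ for some holomorphic $h$. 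Along an integral curve of $X|_S$ the function $g$ then satisfies a linear ODE, and if it vanishes at one point it vanishes along the entire trajectory; this is precisely the invariance of $C$ under the flow of $X|_S$. The symmetric Leibniz computation $L_Y \omega = [Y,X]\wedge Y + X\wedge[Y,Y] = (-c\,Y)\wedge Y + 0 = 0$ yields invariance under the flow of $Y|_S$ as well. Taking $\mathcal{C}$ to be any irreducible component of $C$ through the origin then provides the desired germ of invariant curve.

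I do not anticipate a serious obstacle in this argument; the whole computation relies on nothing more than the Leibniz identity for the Lie derivative together with the Lie algebra relation furnished by hypothesis. The one point that genuinely uses Lemma~\ref{firstreductionsTheoremB-1} is ensuring that $C$ is a bona fide analytic curve rather than the isolated point $\{0\}$: this is where the smoothness of $S$ becomes essential, since on a smooth $2$-dimensional analytic space the zero set of a non-identically-zero holomorphic function is automatically of pure dimension~$1$. Had $S$ been allowed to carry an isolated singularity at $0$, the argument would instead require some additional local analysis to rule out $C = \{0\}$.
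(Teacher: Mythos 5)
There is a genuine gap, and it sits exactly where you flagged a possible weakness: the claim that $S=f^{-1}(0)$ is smooth. In the only case where Lemma~\ref{firstreductionsTheoremB-2} has real content, the codimension-one foliation $\cald$ has an \emph{isolated singular point} at the origin; Malgrange's theorem then gives $\Omega = g\, df$ with $g(0)\neq 0$, so $df$ must vanish at the origin together with $\Omega$. Hence $f$ is a submersion only away from the origin, and $S=f^{-1}(0)$ (or an irreducible component of it) is in general a surface with a singular point at $0$. If $S$ were always smooth, Camacho--Sad would already produce a separatrix for the foliation induced on $S$ and the entire remainder of Section~2 (in particular Theorem~\ref{firstreductionsTheoremB-3}, which is explicitly about a \emph{singular} surface) would be superfluous. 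At a singular point $\Lambda^2 TS$ is not a line bundle, and the tangency locus $\{X|_S\wedge Y|_S=0\}$ --- which is still a proper analytic subset containing the origin and invariant under both vector fields --- can a priori be zero-dimensional, which is precisely the degenerate possibility your argument must, and does not, exclude.

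The paper closes this gap as follows. First, $\mathrm{Sing}(S)$ is itself invariant under $X$ and $Y$, so if it has dimension~$1$ the lemma is proved; one may therefore assume $\mathrm{Sing}(S)\subset\{0\}$. If moreover the tangency set reduces to the origin, then $X|_S$ and $Y|_S$ frame the tangent sheaf of $S$ away from a point, forcing that sheaf to be locally free, and the Scheja--Storch theorem then implies $S$ is smooth after all; at that stage one argues on $(\C^2,0)$ that two vector fields vanishing at the origin and not everywhere parallel must be parallel along a curve --- which is the contradiction. Your smooth-case computation is correct and is essentially this last step in a cleaner form: the Leibniz identity $L_X(X\wedge Y)=X\wedge[X,Y]=\lambda_X\, (X\wedge Y)$ and $L_Y(X\wedge Y)=0$ is a nice explicit justification of the invariance of the tangency locus, which the paper merely asserts, and the pure-codimension-one argument for why that locus is a curve matches the paper's. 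But as written, your proof only covers the case the paper dispatches in its final three lines; the singular-surface reduction via $\mathrm{Sing}(S)$ and Scheja--Storch is the missing (and essential) content.
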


\begin{proof}
Consider first the {\it singular set ${\rm Sing}\, (S) \subset \C^3$ of $S$ viewed as an analytic surface in $\C^3$}.
Apart from being possibly empty, ${\rm Sing}\, (S)$ is an analytic set whose dimension does not exceed~$1$.
Furthermore ${\rm Sing}\, (S)$
is naturally invariant under both $X$ and $Y$. Hence, if the dimension of ${\rm Sing}\, (S)$ equals~$1$, the statement
of the lemma follows at once.

Therefore, to prove the lemma, we can assume that either ${\rm Sing}\, (S)$ is empty or it is reduced to the origin. In either case,
let us consider the set ${\rm Tang}\, (X_{\vert S}, Y_{\vert S})$ consisting of those points $q \in S$ such that
$X(q)$ and $Y(q)$ are linearly dependent. By assumption, ${\rm Tang}\, (X_{\vert S}, Y_{\vert S})$ contains the origin so that
it is not empty. Similarly, this set is a proper analytic subset of $S$.

Moreover ${\rm Tang}\, (X_{\vert S}, Y_{\vert S})$ is invariant under both $X$ and $Y$
since $X$ and $Y$ generates a Lie algebra of dimension~$2$.
Therefore, if this set happens to be an analytic curve, the statement of the lemma results at once.
Thus, summarizing what precedes, we may assume the following holds:
\begin{itemize}
  \item The surface $S$ is either smooth or it has a unique singular point at the origin;

  \item The {\it tangency set}\, ${\rm Tang}\, (X_{\vert S}, Y_{\vert S})$ is reduced to the origin.
\end{itemize}
To complete the proof of the lemma, it suffices to check that the above conditions cannot simultaneously be satisfied.
For this, note that these conditions imply that the tangent sheaf to~$S$ is
locally free. Owing to the main result in \cite{scheja}, it follows that $S$ is smooth. However, since $S$ is smooth,
the vector fields $X_{\vert S}$ and $Y_{\vert S}$ can be identified with vector fields defined around the origin of $\C^2$
and linearly dependent at the origin. Clearly, these two vector fields must remain linearly dependent over some analytic curve
unless they are linearly dependent on a full neighborhood of $(0,0) \in \C^2$. The lemma is proved.
\end{proof}

In view of the preceding, in order to prove Theorem~B, we can assume without loss of generality that the vector fields
$X$ and $Y$ are parallel at every point of $S$. In other words, the vector fields $X_{\vert S}$ and $Y_{\vert S}$ are everywhere
parallel and thus they induce a unique singular holomorphic foliation on $S$ which will be denoted by
$\fol_S$. Theorem~B is now reduced to showing that the foliation $\fol_S$ defined on the analytic surface
$S$ admits a separatrix through the origin. Even though this is not necessary, we may then assume that $S$ is not smooth,
otherwise the existence of the mentioned separatrix is provided by the main result of \cite{casad}. In particular,
Theorem~B holds if the codimension~$1$ foliation $\cald$ is, indeed, non-singular. In fact, in more accurate terms,
Theorem~B is now a consequence of the following result:

\begin{teo}
\label{firstreductionsTheoremB-3}
Suppose that $X$ and $Y$ are as in Theorem~B and that they span a codimension~$1$ foliation $\cald$ having an isolated
singularity at $(0,0,0) \in \C^3$. Let $S$ be an analytic surface containing the origin and invariant under $\cald$
and assume that neither $X$ nor $Y$ vanishes identically on~$S$.
Finally, assume also that $X$ and $Y$ are everywhere parallel on $S$. Then there is a germ of analytic
curve $\mathcal{C}$ passing through the origin which is invariant under~$X$ and~$Y$.
\end{teo}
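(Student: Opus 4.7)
\medskip

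\noindent \textbf{Proof plan.} The whole problem has been moved onto the surface $S$: we must produce a separatrix, through the origin, for the $1$--dimensional foliation $\fol_S$ induced on $S$ by the (everywhere parallel) restrictions $X_{\vert S}$, $Y_{\vert S}$. Since $\cald$ has an isolated singularity at the origin and admits the first integral $f$ granted by Lemma~\ref{firstreductionsTheoremB-1}, $S$ is contained in $f^{-1}(0)$, so $S$ is a hypersurface of $(\C^3,0)$ with an isolated singularity at the origin (if $S$ is smooth then Camacho--Sad applied to $\fol_S$ on $S\simeq(\C^2,0)$ finishes the proof, so from now on assume $S$ is singular exactly at the origin).

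\medskip

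The next step is to take a resolution $\pi:\tilde S\to S$ of the surface singularity, with exceptional divisor $E=\bigcup_i E_i$, and to lift $\fol_S$ to a foliation $\tilde \fol_S$ on $\tilde S$. After performing also the Seidenberg-type reduction of singularities of $\tilde \fol_S$, two clearly separated situations arise. If some irreducible component $E_i$ of $E$ is \emph{not} invariant under $\tilde\fol_S$, then $\tilde\fol_S$ has only finitely many tangencies with $E_i$; a leaf of $\tilde\fol_S$ passing through a generic point of $E_i$ then provides, after taking its analytic closure and projecting down by $\pi$, a germ of analytic curve on $S$ passing through the origin and invariant under $\fol_S$, i.e.\ under both $X$ and $Y$. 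So the genuinely delicate situation is when every component $E_i$ is $\tilde\fol_S$--invariant.

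\medskip

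In this remaining case the plan is to imitate the strategy of Camacho--Sad on the resolved surface: the Camacho--Sad indices of $\tilde \fol_S$ along the $E_i$'s satisfy the index equations attached to the (negative definite) intersection matrix of $E$, and these equations together with a suitable positivity/sign analysis along each cycle or tree of $E$ will typically force the existence of a $\tilde\fol_S$--singular point $p\in E$ at which a separatrix of $\tilde\fol_S$ transverse to the exceptional locus is produced. Such a separatrix descends to a germ of analytic curve in $S$ passing through the origin and invariant under $\fol_S$, as required.

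\medskip

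\noindent \textbf{Main obstacle.} The hard case is exactly the one where the Camacho--Sad index argument fails to produce a non-exceptional separatrix, i.e.\ when the configuration of $E$ and the invariant distribution along it mimics the obstructions known from the Gomez-Mont--Luengo examples and from the singular Kato surfaces of Guillot. Here we must use the extra ambient data which is not available in the abstract surface setting: namely the vector fields $X$ and $Y$ are defined on a full neighborhood of the origin in $\C^3$, the surface $S$ is a level set of a holomorphic first integral, and $X,Y$ satisfy $[X,Y]=0$ or $[X,Y]=cY$. The plan is to show that the obstruction configuration, when pulled back to the ambient resolution of $(\C^3,0)$, would equip a neighborhood of $E$ in $\tilde S$ with the structure of the type of singular Kato surface treated in \cite{adolfo}; Guillot's classification then either contradicts the existence of the ambient extension $X,Y$ with the prescribed Lie bracket, or it exhibits directly an invariant curve, completing the proof of Theorem~\ref{firstreductionsTheoremB-3}.
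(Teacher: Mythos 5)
Your proposal has a genuine gap at its core. The strategy of resolving the surface singularity and running a Camacho--Sad index argument on the exceptional divisor is precisely what is known to \emph{fail} on singular surfaces: the negative definiteness of the intersection matrix is not enough to force a separatrix, and the Gomez-Mont--Luengo examples (and the foliations on singular Kato surfaces studied by Guillot) are exactly the configurations where the index equations admit solutions with no non-exceptional separatrix. You acknowledge this under ``Main obstacle,'' but the proposed rescue --- that Guillot's classification of the obstruction, combined with the existence of the ambient pair $X,Y$ and the relation $[X,Y]=0$ or $[X,Y]=cY$, would yield a contradiction --- is never carried out, and it is far from clear how it would be: locally, nothing prevents a neighborhood of the exceptional divisor from having the structure of a (germ of) singular Kato surface, and the ambient data does not obviously conflict with it. In short, the entire content of the theorem is concentrated in the case your plan leaves open.

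The paper's actual argument never touches indices on the resolution and uses the two vector fields in a much more direct way. Since $X_{\vert S}$ and $Y_{\vert S}$ are everywhere parallel with isolated zeros, one writes $Y_{\vert S}=hX_{\vert S}$ with $h$ holomorphic on $S\setminus\{0\}$; normality of $S$ (an isolated hypersurface singularity) extends $h$ to the origin. A separate order computation on the minimal resolution (Lemma~\ref{firstreductionsTheoremB-5}) rules out the affine relation on $S$, so $[X_{\vert S},Y_{\vert S}]=0$ and hence $h$ is a holomorphic first integral of $\fol_S$; if $h$ is non-constant its level set through the origin is the desired invariant curve (Lemmas~\ref{firstreductionsTheoremB-4} and~\ref{firstreductionsTheoremB-6}). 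The remaining case $h\equiv c_0$ is handled by using the \emph{ambient} first integral $f$ of $\cald$: the vector field $Y-c_0X$ vanishes on $S=f^{-1}(0)$, so one divides by a power of $f$ to get a new holomorphic vector field $Y_1$ tangent to $\cald$, generating with $X$ a Lie algebra isomorphic to the original one, and iterates; since $X$ and $Y$ are not everywhere parallel in $\C^3$, this peeling process must eventually produce a $Y_n$ whose restriction to $S$ is not a constant multiple of $X_{\vert S}$, reducing to the previous case. This is the key mechanism --- exploiting the ratio $h$ and the ambient first integral $f$ --- that your proposal is missing.
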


Theorem~\ref{firstreductionsTheoremB-3} is eminently a two-dimensional result valid for singular spaces; its proof
begins by observing that the origin must be the unique zero of both $X$ and $Y$ on $S$, otherwise the zero-set of
$X$ is invariant by $Y$ (and conversely) which immediately yields the desired invariant curve. Another useful observation
is provided by Lemma~\ref{firstreductionsTheoremB-4} below which is stated in a slightly more general setting. Let
$\overline{S} \subset \C^3$ be a germ of analytic surface with an isolated singular point at $(0,0,0) \in \C^3$.
Given a singular holomorphic foliation $\fol$ on $\overline{S}$, by a {\it holomorphic first integral}\, $h$ for $\fol$
it is meant a holomorphic function defined on $\overline{S} \setminus \{ (0,0,0) \}$ which is constant on the leaves of $\fol$.
With this terminology, we state:

\begin{lemma}
\label{firstreductionsTheoremB-4}
Given $\overline{S}$ and $\fol$ as above, assume that $\fol$ admits a non-constant holomorphic first integral $h$.
Then $\fol$ has a separatrix.
\end{lemma}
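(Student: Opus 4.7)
The strategy is to reduce to a situation where the first integral is defined at the singular point by passing to the normalization of $\overline{S}$. Let $\pi : \widetilde{S} \to \overline{S}$ be the normalization map; since $\overline{S}$ has an isolated singularity at the origin, $\widetilde{S}$ is a normal complex surface, the preimage $\pi^{-1}(0)$ is a finite set, and $\pi$ restricts to a biholomorphism on the complement of this set. Pull back $\fol$ to a singular foliation $\widetilde{\fol}$ on $\widetilde{S}$ and set $\widetilde{h} = h \circ \pi$; the latter is holomorphic on $\widetilde{S} \setminus \pi^{-1}(0)$ and constant along the leaves of $\widetilde{\fol}$ there.

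The crucial step is the extension of $\widetilde{h}$ across $\pi^{-1}(0)$. Since $\widetilde{S}$ is normal and $\pi^{-1}(0)$ is a finite set in a two-dimensional space, hence of codimension two, the second Riemann extension theorem for normal complex spaces yields a holomorphic extension of $\widetilde{h}$ to all of $\widetilde{S}$. By continuity this extension is still a first integral for $\widetilde{\fol}$, so every level set of $\widetilde{h}$ is $\widetilde{\fol}$-invariant.

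Now fix a point $p \in \pi^{-1}(0)$ lying on an irreducible component of the germ $(\widetilde{S}, p)$ along which $\widetilde{h}$ is non-constant. Such a $p$ must exist: otherwise constancy would propagate by the identity principle along each irreducible component of $\widetilde{S}$ and eventually force $h$ to be constant on $\overline{S}$, contradicting the hypothesis. Near $p$, the level set $\{\widetilde{h} = \widetilde{h}(p)\}$ is then a hypersurface in the two-dimensional germ $(\widetilde{S}, p)$, hence a pure one-dimensional analytic germ $\widetilde{\mathcal{C}} \ni p$ invariant under $\widetilde{\fol}$. Its image $\mathcal{C} = \pi(\widetilde{\mathcal{C}})$, analytic by Remmert's proper mapping theorem, is the desired separatrix: a germ of analytic curve through the origin of $\overline{S}$ invariant under $\fol$.

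I expect the main obstacle to be the interplay between the singularity of $\overline{S}$ and the extension of $h$: if $\overline{S}$ is not normal, $h$ need not extend holomorphically across the origin, and one really must normalize before invoking Riemann extension. Once that is done, the remaining points—the dimension count showing that $\widetilde{\mathcal{C}}$ is one-dimensional, the choice of a preimage $p$ along which $\widetilde{h}$ is non-constant, and the fact that $\pi(\widetilde{\mathcal{C}})$ is a genuine analytic curve through the origin rather than collapsing to $0$—are handled routinely by normality, the principal ideal property of the local ring at $p$, and the properness of $\pi$.
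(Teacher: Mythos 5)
Your argument is correct, and its engine is the same as the paper's: normality plus the second Riemann extension theorem across a codimension-two set, followed by taking a level set of the extended first integral. The one genuine difference is that you pass to the normalization $\pi:\widetilde{S}\to\overline{S}$ because you worry that $\overline{S}$ itself may fail to be normal, whereas the paper observes that this failure cannot occur here: $\overline{S}$ is a surface in $\C^3$, hence a hypersurface, hence Cohen--Macaulay, and with isolated singular points Serre's criterion makes it normal. Consequently the paper extends $h$ directly to a holomorphic function $H$ on a neighborhood of the origin in $\C^3$ and takes the separatrix to be $\overline{S}\cap H^{-1}(\lambda)$, with no need to push a curve back down through $\pi$. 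Your route costs a little more (the Remmert/finite-map argument for $\pi(\widetilde{\mathcal{C}})$, and the selection of a point of $\pi^{-1}(0)$ where $\widetilde{h}$ is non-constant) but buys generality: it would survive even if $\overline{S}$ were an abstract, non-normal surface germ. Two small points worth tightening: the existence of your point $p$ ``along which $\widetilde{h}$ is non-constant'' tacitly uses that $\overline{S}$ is irreducible --- which does hold, since two distinct surface components in $\C^3$ would meet along a curve of singular points, contradicting the isolated-singularity hypothesis --- and you should note that the level set through $p$ is a \emph{proper} analytic subset of the two-dimensional germ (i.e., $\widetilde{h}-\widetilde{h}(p)$ is active), which is exactly what your choice of $p$ guarantees.
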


\begin{proof}
The argument is standard. For terminology and further detail, the reader is referred to \cite{nishino}, pages 210-212.
First we claim that $h$ is {\it weakly holomorphic}\, on all of $\overline{S}$ meaning that $h$ is holomorphic
on the regular part of $\overline{S}$ and is bounded on a neighborhood of the isolated singular point of $\overline{S}$ (identified
with the origin). To check the claim, note that $h$ is assumed to be holomorphic, and therefore weakly holomorphic, on $\overline{S}
\setminus \{ (0,0,0) \}$. Since the origin has codimension~$2$ in $\overline{S}$, it follows that $h$ is, in fact,
weakly holomorphic on all of $\overline{S}$. On the other hand, $\overline{S}$ is also a normal analytic surface since $\overline{S}$
is contained in $\C^3$ and has isolated singular points. Now, the fact that $\overline{S}$ is normal ensures
that $h$ has a holomorphic extension $H$ defined on a neighborhood of $(0,0,0) \in \C^3$.

To finish the proof of the lemma consider the holomorphic function $H : (\C^3,0) \rightarrow \C$ and set $H(0,0,0) = \lambda$.
Next, note that the intersection between $\overline{S}$ and the analytic surface
$H^{-1} (\lambda)$ is a certain analytic curve $\mathcal{C}$ passing through the origin and contained in $S$. Since the restriction
$h$ of $H$ to $\overline{S}$ is a first integral for $\fol$, it follows that $\mathcal{C}$ is invariant by $\fol$ and
thus it constitutes a separatrix for this foliation.
\end{proof}

Given a non-identically zero holomorphic vector field defined on an open set $U$ of some complex manifold,
the {\it order of $Z$ at a point $p \in U$}, ${\rm ord}_p (Z)$, is simply
the order of the first non-identically zero jet of $Z$ at $p$. In particular ${\rm ord}_p (Z) = 0$ if and only
if $Z$ is regular at $p$.

Going back to the statement of Theorem~\ref{firstreductionsTheoremB-3}, let us now prove a rather useful lemma.

\begin{lemma}
\label{firstreductionsTheoremB-5}
Let $X$, $Y$, and $S$ be as in Theorem~\ref{firstreductionsTheoremB-3} and denote by
$\fol_S$ the foliation induced by~$X$ and~$Y$ on~$S$. Then the restrictions $X_{\vert S}$ and
$Y_{\vert S}$ of $X$ and $Y$ to $S$ must commute.
\end{lemma}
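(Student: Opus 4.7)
My plan is to split the argument according to the two possibilities for the $2$-dimensional Lie algebra generated by $X$ and $Y$. If $[X,Y]=0$, the conclusion is immediate: the invariance of $S$ under $\cald$ forces $X$ and $Y$ both to be tangent to $S$, and since restriction commutes with Lie brackets one has $[X|_S,Y|_S]=[X,Y]|_S=0$. The substantive case is the affine one, $[X,Y]=cY$ with $c\neq 0$, where restriction only yields $[X|_S,Y|_S]=cY|_S$. My plan is to rule this case out entirely by deriving a direct contradiction with the hypotheses of Theorem~\ref{firstreductionsTheoremB-3}.

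To set this up, I would invoke the preliminary observation recalled just before the lemma: under the hypotheses of Theorem~\ref{firstreductionsTheoremB-3}, the origin is the unique zero on $S$ of each of $X|_S$ and $Y|_S$, since otherwise the zero set of (say) $X|_S$ would give an analytic curve invariant by $Y$, already producing the desired separatrix. Consequently $X|_S$ and $Y|_S$ are parallel and nowhere vanishing on $S\setminus\{0\}$, so there exists a unique holomorphic nowhere-zero function $g$ on $S\setminus\{0\}$ with $Y|_S=g\,X|_S$. A direct computation gives $[X|_S, gX|_S]=X|_S(g)\,X|_S$, and comparison with $cY|_S=cg\,X|_S$ rewrites the bracket relation as the eigenfunction equation $X|_S(g)=cg$ on $S\setminus\{0\}$.

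The final step is to extend $g$ across the origin and locate the contradiction. Since $S$ is an irreducible hypersurface of $\C^3$ with (at worst) an isolated singularity at the origin, $S$ is a normal analytic surface, so Hartogs extension over the codimension-$2$ point $\{0\}$ yields a holomorphic function $G$ on $S$ agreeing with $g$ off the origin and, by analytic continuation, satisfying $X|_S(G)=cG$ throughout $S$. Evaluating this at the origin and using that the ambient vector field $X$ vanishes there (so the derivation $X|_S(G)$ vanishes at $0$ whatever holomorphic extension of $G$ to $\C^3$ one picks), I obtain $cG(0)=0$, hence $G(0)=0$. But $G$ coincides with $g$ off the origin and is therefore nowhere zero away from $0$, so $\{G=0\}\subset S$ reduces to the single point $\{0\}$; this contradicts the analytic principal-ideal theorem, which forces every irreducible component of the zero locus of a non-constant holomorphic function on the irreducible $2$-dimensional $S$ to have dimension~$1$. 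The only escape is $G\equiv 0$, which would force $Y|_S\equiv 0$, contradicting the standing hypothesis. The delicate point I anticipate is ensuring that the Hartogs extension and the dimension-theoretic step are properly justified on the possibly singular surface $S$; both rely on the normality of $S$, which is precisely why that property has been emphasized in the preceding development.
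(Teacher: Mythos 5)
Your proposal is correct, and it takes a genuinely different --- and substantially more elementary --- route than the paper's. Both arguments reduce immediately to excluding the affine relation $[X_{\vert S},Y_{\vert S}]=cY_{\vert S}$ with $c\neq 0$, both write $Y_{\vert S}=g\,X_{\vert S}$ with $g$ holomorphic and nowhere vanishing on $S\setminus\{(0,0,0)\}$, and both use the normality of $S$ (a hypersurface with isolated singularity) to extend $g$ holomorphically across the origin. From there the paths diverge. The paper only rules out the vanishing of the extension at the origin (by the same curve-of-zeros/dimension argument you use at the end), normalizes $H(0)=1$, and then passes to the minimal resolution $\widetilde{S}$ of $S$: a self-intersection/index argument \`a la Camacho--Sad produces a point of the exceptional divisor where the lift of $X_{\vert S}$ has positive order, and a comparison of the orders of $[\widetilde{X}_{\vert S},\widetilde{Y}_{\vert S}]$ and $\widetilde{Y}_{\vert S}$ at that point yields the contradiction. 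You instead use the Leibniz identity $[X_{\vert S},gX_{\vert S}]=X_{\vert S}(g)\,X_{\vert S}$ to turn the affine relation into the eigenfunction equation $X_{\vert S}(g)=cg$, and then evaluate at the origin, where $X$ vanishes, to force $G(0)=0$; this is incompatible with $g$ being nowhere zero off the origin, by purity of dimension of zero sets on the irreducible two-dimensional $S$. In effect you show that the paper's main case $H(0)\neq 0$ never occurs, thereby bypassing the resolution of $S$, the lifting of the vector fields, and the index-theoretic step altogether. The two delicate points --- the Riemann-type extension over the codimension-two point without a boundedness hypothesis, and the fact that a non-constant holomorphic function vanishing at the origin of the irreducible surface $S$ must vanish along a curve --- are exactly the ones you flag, and both are covered by the normality of $S$, which the paper itself invokes in the proof of Lemma~\ref{firstreductionsTheoremB-4}; so your argument is a valid and cleaner substitute for the paper's proof.
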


\begin{proof}
We assume aiming at a contradiction that $[X_{\vert S} , Y_{\vert S}] = c Y_{\vert S}$ for some $c \in \C^{\ast}$.
Recall also that $X_{\vert S}$ and $Y_{\vert S}$ have a unique zero which coincides with the singular point of $S$.
Since $X_{\vert S}$ and $Y_{\vert S}$ are everywhere parallel and $X_{\vert S}$ does not vanish on the regular part
of~$S$, we have $Y_{\vert S} = h X_{\vert S}$ for some holomorphic function $h$ defined on $S \setminus \{ (0,0,0) \}$.
The argument employed in the proof of Lemma~\ref{firstreductionsTheoremB-4} then implies that $h$ admits a holomorphic
extension $H$ defined on a neighborhood of the origin in $\C^3$. We can assume that $H(0,0,0) \neq 0$ since otherwise
the set $H^{-1} (0)$ would intersect $S$ on a analytic curve contained in the zero-set of $Y_{\vert S}$ which contradicts
the assumption that $Y_{\vert S}$ has isolated zeros.

In the sequel, we assume without loss of generality that $H(0,0,0) =1$. Consider a (minimal) resolution $\widetilde{S}$ of
$S \subset \C^3$. The vector fields $X_{\vert S}$ and $Y_{\vert S}$ admit holomorphic lifts $\widetilde{X}_{\vert S}$
and $\widetilde{Y}_{\vert S}$ to $\widetilde{S}$, see for example \cite{adolfo}.
Similarly, the function $H$ induces the holomorphic function $\widetilde{H}
: \widetilde{S} \rightarrow \C$ given by $\widetilde{H} = H \circ \Pi$ where $\Pi$ stands for the resolution map
$\Pi : \widetilde{S} \rightarrow S$. In particular, $\widetilde{H}$ is constant equal to~$1$ on the exceptional
divisor $\Pi^{-1} (0,0,0)$.

We claim that there is a point $p \in \Pi^{-1} (0,0,0)$ at which the order ${\rm ord}_p (\widetilde{X}_{\vert S})$
of $\widetilde{X}_{\vert S}$ is at least~$1$.
To check the claim, recall that ${\rm ord}_p (\widetilde{X}_{\vert S}) =0$ if and only if $\widetilde{X}_{\vert S}$ is regular
at $p$. Thus, if the claim is false, the vector field $\widetilde{X}_{\vert S}$ must be regular on all of the exceptional
divisor $\Pi^{-1} (0,0,0)$. Assume that this is the case and consider an irreducible component $D$ of $\Pi^{-1} (0,0,0)$.
The curve $D$ must be invariant under $X$ otherwise its self-intersection would be nonnegative what is impossible.
This means that $X$ is tangent to $D$ and, since $X$ has no singular points in $\Pi^{-1} (0,0,0)$, $D$ must be an
elliptic curve and regular leaf for the foliation associated to $X$. The index formula of \cite{casad}, or simply the standard
Bott connection, implies that the self-intersection of $D$ still vanishes and this yields the final contradiction.

Let then $p \in \Pi^{-1} (0,0,0)$ be such that ${\rm ord}_p (\widetilde{X}_{\vert S}) \geq 1$. Since $\widetilde{H} (p) =1$,
we have ${\rm ord}_p (\widetilde{X}_{\vert S}) = {\rm ord}_p (\widetilde{Y}_{\vert S})$. Now,
suppose first that ${\rm ord}_p (\widetilde{X}_{\vert S})
\geq 2$. Then the order ${\rm ord}_p ([\widetilde{X}_{\vert S} ,\widetilde{Y}_{\vert S}])$ of
$[\widetilde{X}_{\vert S} ,\widetilde{Y}_{\vert S}]$ at~$p$ satisfies
$$
{\rm ord}_p ( [\widetilde{X}_{\vert S} ,\widetilde{Y}_{\vert S}]) \geq 2 \, {\rm ord}_p (\widetilde{X}_{\vert S}) -1 >
{\rm ord}_p (\widetilde{X}_{\vert S}) = {\rm ord}_p (\widetilde{Y}_{\vert S})
$$
so that the equation $[\widetilde{X}_{\vert S} ,\widetilde{Y}_{\vert S}] = c \widetilde{Y}_{\vert S}$ with $c \in \C^{\ast}$
cannot hold. Therefore it only remains to check the possibility of having
${\rm ord}_p (\widetilde{X}_{\vert S}) =1$. For this, note again that $\widetilde{Y}_{\vert S} = \widetilde{H} \widetilde{X}_{\vert S}$
so that the first jet of $\widetilde{X}_{\vert S}$ and of $\widetilde{Y}_{\vert S}$ coincide at $p$ (recall that
$\widetilde{H} (p) =1$). Thus the linear part of the commutator $[\widetilde{X}_{\vert S} ,\widetilde{Y}_{\vert S}]$
must still vanish and this ensures that the order of $[\widetilde{X}_{\vert S} ,\widetilde{Y}_{\vert S}]$ at~$p$
is again strictly larger than the order of $\widetilde{Y}_{\vert S}$. This contradicts the equation
$[\widetilde{X}_{\vert S} ,\widetilde{Y}_{\vert S}] = c \widetilde{Y}_{\vert S}$ and
completes the proof of the lemma.
\end{proof}

The next lemma is an application of Lemmas~\ref{firstreductionsTheoremB-4} and~\ref{firstreductionsTheoremB-5}.

\begin{lemma}
\label{firstreductionsTheoremB-6}
Let $X$, $Y$, $S$, and $\fol_S$ be as in Theorem~\ref{firstreductionsTheoremB-3}. If the restrictions $X_{\vert S}$ and
$Y_{\vert S}$ of $X$ and $Y$ to $S$ do not differ by a multiplicative constant, then $\fol_S$ admits a separatrix.
\end{lemma}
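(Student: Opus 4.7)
The plan is to combine Lemmas~\ref{firstreductionsTheoremB-4} and~\ref{firstreductionsTheoremB-5} with a direct computation to produce a non-constant holomorphic first integral for $\fol_S$ on $S \setminus \{(0,0,0)\}$, from which the separatrix is obtained via Lemma~\ref{firstreductionsTheoremB-4}.

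First I would exploit the fact that $X_{\vert S}$ and $Y_{\vert S}$ are everywhere parallel, together with the observation (already established in the proof of Lemma~\ref{firstreductionsTheoremB-5}) that both restrictions vanish only at the singular point of $S$. This allows one to write $Y_{\vert S} = h \, X_{\vert S}$ where $h$ is a holomorphic function defined on $S \setminus \{(0,0,0)\}$. Arguing as in Lemma~\ref{firstreductionsTheoremB-5}, weak holomorphy combined with the normality of the surface $S \subset \C^3$ ensures that $h$ admits a holomorphic extension $H$ to a full neighborhood of the origin in $\C^3$, and hence $h$ is in fact holomorphic on all of $S$.

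Next I invoke Lemma~\ref{firstreductionsTheoremB-5}, which guarantees that $X_{\vert S}$ and $Y_{\vert S}$ commute. Expanding the bracket yields
\[
0 = [X_{\vert S}, Y_{\vert S}] = [X_{\vert S}, h \, X_{\vert S}] = X_{\vert S}(h) \, X_{\vert S}
\]
on the regular part of $S$. Since $X_{\vert S}$ does not vanish on the regular part of $S$ away from the origin, we conclude that $X_{\vert S}(h) \equiv 0$. Thus $h$ is a holomorphic first integral for $X_{\vert S}$, and hence for $\fol_S$, on $S \setminus \{(0,0,0)\}$.

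Finally, I would rule out the case in which $h$ is constant: if $h$ were identically equal to some constant $c \in \C$, then $Y_{\vert S} = c \, X_{\vert S}$ on a full neighborhood of the origin in $S$, contradicting the standing hypothesis that $X_{\vert S}$ and $Y_{\vert S}$ do not differ by a multiplicative constant. Therefore $h$ is a non-constant holomorphic first integral for $\fol_S$ in the sense of Lemma~\ref{firstreductionsTheoremB-4}, and that lemma delivers the desired separatrix. The argument is essentially routine once Lemma~\ref{firstreductionsTheoremB-5} is in hand; the only point that requires a moment of attention is verifying that the extension of $h$ is compatible with the normality argument of Lemma~\ref{firstreductionsTheoremB-4}, but this has already been checked in the proof of Lemma~\ref{firstreductionsTheoremB-5}.
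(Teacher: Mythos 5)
Your proposal is correct and follows essentially the same route as the paper: invoke Lemma~\ref{firstreductionsTheoremB-5} to get $[X_{\vert S},Y_{\vert S}]=0$, deduce that the ratio $h$ with $Y_{\vert S}=hX_{\vert S}$ is a first integral of $\fol_S$, observe it is non-constant by hypothesis, and conclude via Lemma~\ref{firstreductionsTheoremB-4}. The only difference is that you spell out the computation $[X_{\vert S},hX_{\vert S}]=X_{\vert S}(h)\,X_{\vert S}$ and the (unnecessary for this lemma) extension of $h$ across the singular point, both of which the paper leaves implicit.
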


\begin{proof}
As mentioned, the vector fields $X_{\vert S}$ and $Y_{\vert S}$ can be assumed to have an isolated zero at
$(0,0,0) \subset S$. Moreover Lemma~\ref{firstreductionsTheoremB-5} ensures that $[X_{\vert S} , Y_{\vert S}]=0$.
In particular, the function $h$ defined on $S \setminus \{ (0,0,0) \}$ by means of the equation
$Y_{\vert S} = h X_{\vert S}$ must be a first integral of~$\fol_S$. Hence
Lemma~\ref{firstreductionsTheoremB-4} ensures that $\fol_S$ admits a separatrix unless $h$ is constant. The lemma is proved.
\end{proof}

We are now ready to prove Theorem~\ref{firstreductionsTheoremB-3}.

\begin{proof}[Proof of Theorem~\ref{firstreductionsTheoremB-3}]
We assume for a contradiction that no germ of curve $\mathcal{C} \subset S$ passing through the origin
is simultaneously invariant under~$X$ and~$Y$. Next
consider the vector fields $X_{\vert S}$ and $Y_{\vert S}$ on $S$. As already mentioned,
the singular set of~$S$ must consist of isolated points. Moreover this singular set
contains the zero sets of both~$X$ and~$Y$. In particular,
Lemmas~\ref{firstreductionsTheoremB-5}
and~\ref{firstreductionsTheoremB-6} imply that the commutator $[X_{\vert S} , Y_{\vert S}]$ vanishes identically and
that a separatrix for $\fol_S$ exists unless $Y_{\vert S}$ is a constant multiple of $X_{\vert S}$. In the sequel, we
assume this to be the case.

Therefore consider $c_0 \in \C$ such that $Y_{\vert S} = c_0 X_{\vert S}$ and denote by $\mathfrak{G}$ the Lie algebra
generated by $X$ and~$Y$. Note that the vector field $Z_1 = Y - c_0 X$ clearly belongs to $\mathfrak{G}$ and, in
addition, vanishes identically over $S=f^{-1} (0)$, where $f$ is the first integral of $\cald$ mentioned in
Lemma~\ref{firstreductionsTheoremB-1}. Thus, there is $k_1 \in \N^{\ast}$ such that $Y_1 = Z_1 / f^{k_1}$
is a holomorphic vector field defined on a neighborhood of $(0,0,0) \in \C^3$ which does not vanish identically on~$S$
(recall that $S$ is assumed to be irreducible). Moreover, $Y_1$ is still tangent to the foliation
$\cald$ and hence it leaves $S$ invariant.

\noindent {\it Claim}. The restriction $Y_{1\vert S}$ of the vector field $Y_1$ to~$S$ is still a constant multiple
of $X_{\vert S}$.

\noindent {\it Proof of the Claim}. Since $f$ is a first integral of both $X$ and $Y$, the vector fields
$X$ and $Y_1$ either commute or generate a Lie algebra isomorphic to the Lie
algebra of the affine group. In other words, the Lie algebra
generated by $X$ and $Y_1$ is isomorphic to $\mathfrak{G}$. Therefore, if $Y_{1\vert S}$ were not a constant multiple
of $X_{\vert S}$, there would exist a germ of analytic curve contained in $S$, passing through the origin and
invariant under both $X$ and $Y_1$. This is impossible because such a curve is automatically invariant by $Y$ as well
since $Y_{\vert S} = c_0 X_{\vert S}$. The claim is proved.\qed

Setting $Y_{1\vert S} = c_1 X_{\vert S}$, it follows again that $Y_{1} - c_1 X = f^{k_2} Y_2$
where $Y_2$ is a holomorphic vector field defined around $(0,0,0) \in \C^3$ which is tangent to $S$ and does
not vanish identically on~$S$. Therefore, we have
$$
Y = c_0 X + f^{k_1} Y_1 = c_0 X + c_1 f^{k_1} X + f^{k_1+k_2} Y_2 \, .
$$
Again the fact that $f$ is a first integral for $X$ and $Y$ implies that $X$ and $Y_2$ generate a Lie algebra
isomorphic to $\mathfrak{G}$. Thus, up to repeating the above argument and assuming
that no germ of curve contained in $S$ and passing through $(0,0,0)$ is invariant by~$X$, we conclude that
$Y_2$ must have the form $Y_2 =c_2 X$ for some constant $c_2 \in \C$. The above procedure can then be repeated to yield
a holomorphic vector field $Y_3$ along with a certain $k_3 \in \N^{\ast}$ such that
$$
Y =  c_0 X + c_1 f^{k_1} X + c_2 f^{k_1+k_2} X + f^{k_1+k_2+k_3} Y_3 \, .
$$
Moreover, $Y_3$ is tangent to~$S$ and does not vanish identically on~$S$. Since $X$ and $Y$ are not parallel at every point in
a neighborhood of the origin in $\C^3$, by continuing this procedure we shall
eventually find a vector field $Y_n$ as above whose restriction to~$S$ will not be a constant multiple of~$X$.
A contradiction proving Theorem~\ref{firstreductionsTheoremB-3} will then immediately arise.
\end{proof}

\begin{proof}[Proof of Theorem B]
It follows from Theorem~\ref{firstreductionsTheoremB-3} combined to the discussion conducted in the
beginning of this section. We summarize the argument for the convenience of the reader.

Consider holomorphic vector fields $X$ and $Y$ as in Theorem~B. We assume aiming at a contradiction
that there is no (germ of) analytic curve $\mathcal{C} \subset \C^3$ containing the origin and simultaneously
invariant under $X$ and~$Y$. In particular, this implies that the codimension~$1$ foliation $\cald$ spanned
by $X$ and~$Y$ possesses an isolated singular point at the origin and it is given by the level surfaces of a
holomorphic function $f : (\C^3 ,0) \rightarrow (\C,0)$. Denote by $S$ an irreducible component of $f^{-1} (0)$
and consider the restrictions $X_{\vert S}$ and $Y_{\vert S}$ of $X$ and $Y$ to~$S$.

Since we are assuming that Theorem~B does not hold, it follows from Lemma~\ref{firstreductionsTheoremB-2}
that $X_{\vert S}$ and $Y_{\vert S}$ are parallel at every point of~$S$. Note that these two vector fields cannot
simultaneously be identically zero for otherwise the statement is obvious. If none of these vector fields vanish
identically then Theorem~B follows from Theorem~\ref{firstreductionsTheoremB-3}. Therefore there remains only
one possibility not yet covered by
our discussion which corresponds to the situation where one of the vector fields vanishes identically over~$S$
while the other has an isolated zero at the origin (identified with the isolated
singular point of~$S$). This case can however be dealt with by arguing as in the proof of Theorem~\ref{firstreductionsTheoremB-3}.
Namely, suppose for example that $Y_{\vert S}$ vanishes identically. Then, up to dividing $Y$ by a suitable power
of the first integral~$f$, we shall obtain a vector field $\overline{Y}$ whose restriction to~$S$ is no longer identically
zero. Again the Lie algebra generated by $X$ and by $\overline{Y}$ is isomorphic to the Lie algebra generated by $X$ and
by $Y$. Thus the procedure described in the proof of Theorem~\ref{firstreductionsTheoremB-3} to produce a separatrix
for the foliation induced on~$S$ can be started with the vector fields $X$ and $\overline{Y}$. This completes
the proof of Theorem~B.
\end{proof}

\section{Lie groups actions and holomorphic vector fields}

All vector fields considered in the remainder of this paper are assumed not to vanish identically unless otherwise stated.

Consider a compact complex manifold $M$ and let
${\rm Aut}\, (M)$ denote the group of holomorphic diffeomorphisms of $M$. It is well known that
${\rm Aut}\, (M)$ is a finite-dimensional complex Lie group whose action on $M$ is
faithful and holomorphic. Moreover, the Lie algebra of ${\rm Aut}\, (M)$ can be identified to the Lie algebra
$\mathfrak{X} \, (M)$ formed by all holomorphic vector fields defined on $M$, see for example \cite{akhiezer}.

Since $M$ is compact, every holomorphic vector field $X$ defined on all of $M$ is {\it complete}\, in the sense
that it gives rise to an action of $(\C,+)$ on $M$. In particular, the restriction of $X$ to every open set
$U \subset M$ is {\it semi-complete}\, on $U$, according to the definition given in \cite{JR1} which is recalled
below for the convenience of the reader.

\begin{defnc}
A holomorphic vector field~$X$ on a complex manifold~$N$ is called semi-complete
if for every~$p\in N$ there exists a connected domain~$U_p\subset\C$
with~$0\in U_p$ and a map~$\phi_p:U_p\to N$ satisfying the following conditions:
\begin{itemize}
\item $\phi_p(0)=p$ and $d \phi_p/d t|_{t=t_0}=X(\phi_p(t_0))$.
\item For every sequence~$\{t_i\}\subset U_p$ such
that~$\lim_{i\rightarrow\infty}t_i\in\partial U_p$ the sequence $\{\phi_p(t_i)\}$
escapes from every compact subset of~$N$.
\end{itemize}
\end{defnc}

If $X$ is semi-complete on $U$ and $V \subset U$ is another open set, then
the restriction of $X$ to $V$ is semi-complete as well. Hence, there is a well-defined notion of {\it semi-complete singularity}.
Furthermore, if a singularity of a vector field happens {\it not to be}\, semi-complete,
then this singularity cannot be realized as singularity of a vector field defined on a compact complex manifold.
Keeping this principle in mind, a fundamental result implicitly formulated in \cite{JR1} reads as follows.

\begin{lemma}
\label{semicomplete1.0}
Assume that $Y$ is a holomorphic vector field with isolated singular points and defined around
the origin of $\C^n$. Suppose that the second jet $J^2 (X) \, (0, \ldots ,0)$ of $X$ at the origin vanishes
and that $X$ is tangent to some analytic curve $\mathcal{C}$ passing through the origin. Then the germ
of $X$ is not semi-complete.
\end{lemma}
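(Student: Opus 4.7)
The plan is to reduce the assertion to a one-dimensional criterion on the normalization of the invariant curve $\mathcal{C}$. First, I would replace $\mathcal{C}$ by one of its irreducible components passing through the origin (tangency of $X$ to each branch follows from tangency at generic smooth points together with analytic continuation). Let $\pi:(\C,0)\to\mathcal{C}$ be the normalization map, written as $\pi(t)=(\pi_1(t),\ldots,\pi_n(t))$ with $m_i:=\mathrm{ord}_0(\pi_i)$ and multiplicity $m:=\min_i m_i\ge 1$. Since $X$ is tangent to $\mathcal{C}$, the identity $d\pi(t)\cdot\tilde X(t)=X(\pi(t))$ defines a germ $\tilde X=f(t)\,\partial/\partial t$ of holomorphic vector field at $(\C,0)$. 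The hypothesis $J^{2}(X)(0)=0$ gives $X_i(x)=O(|x|^{3})$, hence $X_i(\pi(t))=O(|t|^{3m})$; choosing an index $i$ with $m_i=m$ yields $\mathrm{ord}_0(\pi_i')=m-1$, so the component equation $f(t)\,\pi_i'(t)=X_i(\pi(t))$ forces
\[
\mathrm{ord}_0(f)\;\ge\;3m-(m-1)\;=\;2m+1\;\ge\;3.
\]

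Next I would transfer semi-completeness from the ambient field to this one-dimensional pullback. Assume for a contradiction that $X$ is semi-complete on a neighborhood $U$ of the origin in $\C^n$, and set $\widetilde U:=\pi^{-1}(\mathcal{C}\cap U)$. For $p\in\widetilde U\setminus\{0\}$, the point $\pi(p)$ is a smooth point of $\mathcal{C}\setminus\{0\}$, and the semi-complete flow $\phi:U_p\to U$ of $X$ starting at $\pi(p)$ stays in $\mathcal{C}\setminus\{0\}$: this uses invariance of $\mathcal{C}$ together with the crucial fact that the origin, being a zero of $X$, is not reached by a non-trivial orbit in finite time. Since $\pi$ restricts to a biholomorphism from $\C\setminus\{0\}$ onto $\mathcal{C}\setminus\{0\}$, the map $\tilde\phi_p:=\pi^{-1}\circ\phi$ is a well-defined holomorphic integral curve of $\tilde X$ with $\tilde\phi_p(0)=p$. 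Properness of the normalization transfers the escape condition: if $\tilde\phi_p(s_i)$ failed to leave every compact subset of $\widetilde U$, then $\phi(s_i)=\pi(\tilde\phi_p(s_i))$ would fail to leave every compact subset of $U$. For $p=0$ the constant map suffices, so $\tilde X$ is semi-complete on $\widetilde U$.

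Finally I would invoke the classical one-dimensional criterion from \cite{JR1}: a germ $f(t)\,\partial/\partial t$ on $(\C,0)$ with $\mathrm{ord}_0(f)\ge 3$ cannot be semi-complete on any neighborhood of the origin (the analytic continuation of the flow encircles a finite-time branch singularity and returns to bounded values, preventing the escape condition from holding on any simply-connected time-domain). Combining this with the order bound of the first step and the transfer of the second yields the required contradiction and proves the lemma.

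The principal obstacle is the second step, namely the transfer of semi-completeness through the (possibly ramified) normalization. Two potential pitfalls must be addressed: single-valuedness of the lift at the singular point of $\mathcal{C}$, and preservation of the escape condition. Both are handled by the observation that the origin is a singular point of $X$, so every non-trivial orbit is confined to the smooth locus of $\mathcal{C}$ where $\pi$ is biholomorphic, combined with the properness of the normalization map.
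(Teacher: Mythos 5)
Your proposal is correct and follows essentially the same route as the paper: pull $X$ back through the Puiseux parameterization/normalization of $\mathcal{C}$, show the resulting one-dimensional germ has order at least $3$, and conclude it is not semi-complete (the paper re-derives this last fact by renormalizing to $z^{s}\partial/\partial z$ and exhibiting its multivalued flow, whereas you cite it from \cite{JR1}). Just note explicitly that the isolated-singularity hypothesis is what guarantees $X$ does not vanish identically on $\mathcal{C}$, so that $f\not\equiv 0$ and the order bound has content.
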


\begin{proof}
It suffices to sketch the elementary argument. The
restriction $X_{\mathcal{C}}$ of $X$ to $\mathcal{C}$ does not vanish identically since the origin is an
isolated singularity of $X$. Clearly it suffices to show that $X_{\mathcal{C}}$ is not semi-complete. For this,
recall that the curve $\mathcal{C}$ admits
an irreducible Puiseux parameterization $\mathcal{P} : (\C , 0) \rightarrow (\mathcal{C}, 0)$.
In particular, the pull-back $\mathcal{P}^{\ast} X_{\mathcal{C}}$ is a (non-identically
zero) holomorphic vector field defined on a neighborhood of $0 \in \C$. The proof of the lemma is then reduced
to checking that $\mathcal{P}^{\ast} X_{\mathcal{C}}$ is not semi-complete on any neighborhood of $0 \in \C$.

Since the second jet of $X$ vanishes at the origin of $\C^n$, it is straightforward to check that the second jet of
$\mathcal{P}^{\ast} X_{\mathcal{C}}$ vanishes at $0 \in \C$. Hence $\mathcal{P}^{\ast} X_{\mathcal{C}}$ is locally
given by~$z^{s}f(z)\partial/\partial z$ for some~$s\geq 3$ and a holomorphic function~$f$ satisfying $f(0) \neq 0$.
Now, assuming that $\mathcal{P}^{\ast} X_{\mathcal{C}} = z^{s}f(z)\partial/\partial z$ is semi-complete, it follows that
so are the vector fields $X_n =  \Lambda_n^{\ast} (z^{s}f(z)\partial/\partial z)$ where $\Lambda_n : \C \rightarrow \C$
is given by $\Lambda_n (z) = z/n$. Also every constant multiple of a semi-complete vector field is again semi-complete.
Putting together these two remarks, the
vector field $z^{s}\partial/\partial z$ is the uniform limit of a sequence of semi-complete vector fields which is
obtained by suitably renormalizing the sequence $\{ X_n \}$. There follows that $z^{s}\partial/\partial z$ must be semi-complete
as well, see for example \cite{ghysrebelo}. However the solution $\phi$ of the differential equation corresponding
to $z^{s}\partial/\partial z$ is given by
$$
\phi (T) = \frac{z_0}{\sqrt[s-1]{1 - Tz_0^{s-1}(s-1)}}
$$
where $\phi (0) = z_0$. Since $s \geq 3$, this solution is multivalued so that $z^{s}\partial/\partial z$ is never
semi-complete on a neighborhood of $0 \in \C$. The lemma follows.
\end{proof}

Let then $M$ be as in Theorem~A. In particular, the dimension of
$\mathfrak{X} \, (M)$ is at least~$2$. Hereafter, we assume aiming at a contradiction that $Y \in \mathfrak{X} \, (M)$
is a holomorphic vector field possessing an isolated singular point $p \in M$ where the second jet of $Y$ vanishes
(notation: $J^2 (Y) (p) = 0$).
The idea is then to exploit Theorem~B and Lemma~\ref{semicomplete1.0} to derive a contradiction implying Theorem~A. In this section,
we shall content ourselves with establishing a slightly weaker statement, namely:

\begin{teo}
\label{WeakTheoremA}
Assume $Y \in \mathfrak{X} \, (M)$ is a holomorphic vector field possessing an isolated singular point $p \in M$ where
its second jet vanishes. Then $Y$ admits a non-constant meromorphic first integral $f$ defined on~$M$. Moreover, the Lie
algebra $\mathfrak{X} \, (M)$ contains a vector field $X$ such that $X=fY$.
\end{teo}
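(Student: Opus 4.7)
The plan is to combine the finite-dimensional Lie algebra structure of $\mathfrak{X}(M)$ with the degeneracy $J^2(Y)(p)=0$ and the completeness of every element of $\mathfrak{X}(M)$ (forced by compactness of $M$), so that Theorem~B together with Lemma~\ref{semicomplete1.0} produces the desired meromorphic first integral $f$ and the associated vector field $X=fY$.

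First I would apply the Jordan decomposition over $\C$ to the linear operator $\mathrm{ad}_Y:\mathfrak{X}(M)\to\mathfrak{X}(M)$. Since $\dim\mathfrak{X}(M)\geq 2$ and $\mathrm{ad}_Y(Y)=0$, one of three cases necessarily occurs: either there is $V\in\mathfrak{X}(M)\setminus\C Y$ with $[Y,V]=0$ (commutative case), or an eigenvector $V\in\mathfrak{X}(M)\setminus\C Y$ with $[Y,V]=\lambda V$ for some $\lambda\in\C^{\ast}$ (nonzero-eigenvalue case), or $\mathrm{ad}_Y$ is nilpotent with $\ker(\mathrm{ad}_Y)=\C Y$, in which case there is a Jordan-chain element $V\in\mathfrak{X}(M)\setminus\C Y$ with $[Y,V]=Y$. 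In each case the pair formed by $V$ and $Y$ generates a two-dimensional Lie subalgebra of the type allowed by Theorem~B (possibly after interchanging the roles of the two vector fields). A uniform flow-box argument yields $V(p)=0$: otherwise, straightening $V$ to $\partial/\partial z_1$ in local coordinates near $p$ and integrating the selected Lie relation forces $Y$ to depend only on $(z_2,z_3)$, up to an explicit factor involving $z_1$; each such form is incompatible either with $Y$ having an isolated singular set at $p$ or with the vanishing $DY(p)=0$ implied by $J^2(Y)(p)=0$.

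Next I would show that $V\wedge Y\equiv 0$ as a section of $\Lambda^2 TM$. Suppose to the contrary that $V\wedge Y\not\equiv 0$; analytic continuation then gives $V\wedge Y\not\equiv 0$ near $p$, and the pair $(V,Y)$ (or $(Y,V)$) satisfies the hypothesis of Theorem~B. Theorem~B provides a germ of analytic curve $\mathcal{C}$ through $p$ invariant under both $V$ and $Y$; in particular $\mathcal{C}$ is a separatrix of $Y$ at $p$. Applying Lemma~\ref{semicomplete1.0} with the vanishing second jet $J^2(Y)(p)=0$, the germ of $Y$ at $p$ cannot be semi-complete on any neighborhood of $p$. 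This contradicts the completeness of $Y$ on the compact manifold $M$. Hence $V\wedge Y\equiv 0$, and we may write $V=fY$ for some non-constant meromorphic function $f$ on $M$.

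To conclude, I would show that only the commutative case from the first step survives, so that $Y(f)=0$ and $f$ is the sought-after first integral. From $V=fY$ one computes $[V,Y]=-Y(f)\,Y$, so the nonzero-eigenvalue case translates into $Y(f)=\lambda f$ and the Jordan-chain case into $Y(f)=1$. The first equation is excluded by a direct order count on the formal Laurent expansion of $f$ in local coordinates at $p$, using that $Y=O(|z|^3)$: the action of $Y$ strictly raises the homogeneous degree, so the leading term of $f$ cannot satisfy $Y(f)=\lambda f$ with $\lambda\neq 0$, forcing $f\equiv 0$ and contradicting $V\neq 0$. For the second equation, the same order argument forces $f$ to have a pole of definite order at $p$; consequently $V=fY$ has a simple isolated zero at $p$ with non-trivial linear part $DV(p)$. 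Standard separatrix-existence results then provide a germ of curve through $p$ invariant under $V$, which since $V$ and $Y$ are parallel is also invariant under $Y$, yielding once again a separatrix for $Y$ and hence a contradiction with Lemma~\ref{semicomplete1.0}. Only the commutative case remains, giving $Y(f)=0$ and $X=V=fY\in\mathfrak{X}(M)$. The main obstacle in this plan is the last step, specifically the refined separatrix-existence argument for $V$ in the Jordan-chain alternative---most delicate when the linear part $DV(p)$ is nilpotent of rank one.
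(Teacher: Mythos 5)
Your overall strategy is sound and genuinely different from the paper's. Where you diagonalize, the paper counts orders: it introduces $m=\max\{{\rm ord}_p(Z): Z\in\mathfrak{X}(M)\}$ and uses the inequality ${\rm ord}_p([Z,W])\ge {\rm ord}_p(Z)+{\rm ord}_p(W)-1$ together with $k={\rm ord}_p(Y)\ge 3$ to force, for a suitable $X$, either $[X,Y]=0$ or $[X,Y]=cY$; your Jordan decomposition of ${\rm ad}_Y$ reaches the same commutative/affine dichotomy more structurally, at the cost of having to verify $V(p)=0$ by hand (your flow-box argument does this correctly, whereas in the paper the auxiliary field automatically vanishes at $p$ to high order). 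Both proofs then run the same engine: Lemma~\ref{semicomplete1.0} forbids any separatrix of $Y$ through $p$, so Theorem~B forces the two fields to be everywhere parallel, and the commutation relation is converted into a functional equation for $f=V/Y$. Your treatment of the commutative case and of the eigenvalue case $Y(f)=\lambda f$ is correct (note only that $f$ is holomorphic at $p$ --- see below --- so no Laurent expansion is needed; the order count on the Taylor series already gives $f\equiv 0$).

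The one genuine defect is your handling of the Jordan-chain case $[Y,V]=Y$, i.e.\ $Y(f)=1$. Your claim that the order count ``forces $f$ to have a pole of definite order at $p$'' cannot occur: since $p$ is an isolated zero of $Y$ and $V=fY$ is holomorphic, $f$ is holomorphic on a punctured neighborhood of $p$ and extends holomorphically across $p$ by Hartogs (the polar set of a meromorphic function on a threefold is a divisor and cannot be an isolated point); the paper uses exactly this observation in its Case~B. Consequently the fallback step --- producing a separatrix for $V$ from a ``non-trivial linear part $DV(p)$'' via ``standard separatrix-existence results'' --- rests on a false premise, and moreover no such general result exists in $(\C^3,0)$: by \cite{gmont} there are vector fields on $(\C^3,0)$ with nonzero linear part and no separatrix, which is precisely why Theorem~B is needed at all. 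The good news is that the case you found most delicate is in fact vacuous: once $f$ is known to be holomorphic at $p$, evaluating $Y(f)=1$ at $p$ gives $0=1$ (the left side vanishes at $p$ because $Y(p)=0$), and the Jordan-chain alternative is excluded outright. With that one-line replacement your proof is complete.
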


In the next section, we shall study in detail the exceptional situation described in Theorem~\ref{WeakTheoremA} so as to exclude
its existence. Theorem~A will then automatically follow.

In the remainder of this section, we shall focus on the proof of Theorem~\ref{WeakTheoremA}.

For every non-identically zero vector field $Z \in \mathfrak{X} \, (M)$, denote by ${\rm ord}_p (Z)$ the order of $Z$ at
$p \in M$, see Section~2. We then set $k={\rm ord}_p (Y)$ so that, by assumption, we have $k \geq 3$.
Since the vector space $\mathfrak{X} \, (M)$ has finite
dimension, the subset of $\N$ formed by those integers that are realized as the order at $p$
of some vector field $Z \in \mathfrak{X} \, (M)$
is finite. The (attained) supremum of this set will be denoted by $m \in \N^{\ast}$. Clearly $m \geq k \geq 3$. Now let us
consider two different possibilities for $m$ and $k$.
\begin{itemize}
  \item[{\sc A}.] We have $m \geq k$ and, if $m=k$, then there is a vector field $X \in \mathfrak{X} \, (M)$
  which is not a constant multiple of $Y$ and satisfies ${\rm ord}_p (X) = m =k$.

  \item[{\sc B}.] We have $m=k$ and every vector field $Z \in \mathfrak{X} \, (M)$ satisfying
  ${\rm ord}_p (Z) = m =k$ is a constant multiple of~$Y$.
\end{itemize}

\noindent {\sc Case A}. To prove Theorem~\ref{WeakTheoremA} in this case, we proceed as follows. Fix a vector field
$X \in \mathfrak{X} \, (M)$, which is not a constant multiple of~$Y$, and whose order at $p$ equals~$m$. Note that
the commutator $[X,Y]$ must vanish identically since, otherwise, its order at $p$ is at least $m+k-1 \geq m+2 >m$
which contradicts the definition of~$m$.

\vspace{0.1cm}

\begin{proof}[Proof of Theorem~\ref{WeakTheoremA} in {\sc Case A}]
According to Lemma~\ref{semicomplete1.0}, the vector field $Y$ cannot admit a local separatrix through~$p$.
Since we have $[X,Y]=0$, Theorem~B implies that $X$ and $Y$ must be everywhere parallel so that they define a single
singular holomorphic foliation $\fol$ on $M$. Also we can write
$X = fY$ for a certain meromorphic function $f$ defined on~$M$. This meromorphic function is by assumption non-constant.
Furthermore, as already seen, the restriction of $f$ to a leaf of $\fol$ must
be constant since $[X,Y]=0$. The proof of Theorem~\ref{WeakTheoremA} in {\sc Case A} is then completed.
\end{proof}

\vspace{0.1cm}

\noindent {\sc Case B}. Consider a vector field $X \in \mathfrak{X} \, (M)$ which is not a constant multiple of
$Y$. The existence of $X$ is ensured by the fact that the dimension of $\mathfrak{X} \, (M)$ is at least~$2$.

\vspace{0.1cm}

\begin{proof}[Proof of Theorem~\ref{WeakTheoremA} in {\sc Case B}]
Consider the order ${\rm ord}_p (X)$ of $X$ at the point $p \in M$. Assume first that ${\rm ord}_p (X) \geq 2$.
Then $X$ and $Y$ must commute since otherwise the order of $[X,Y]$ is at least $k - 1 + {\rm ord}_p (X) \geq k+1$
which contradicts the fact that $m=k$. Since Lemma~\ref{semicomplete1.0} ensures that $Y$ cannot have a separatrix
through~$p$, it follows from Theorem~B that $X$ is everywhere parallel to~$Y$. Now the argument used to prove
Theorem~\ref{WeakTheoremA} in {\sc Case A} can also be applied to the present situation to complete the proof of the
theorem. In fact, the mentioned argument shows that whenever $\mathfrak{X} \, (M)$ contains a vector field commuting
with $Y$, the statement of Theorem~\ref{WeakTheoremA} must hold.

Suppose now that ${\rm ord}_p (X) \leq 1$. As mentioned above, we can assume without loss of generality that $X$ does not commute with~$Y$.
Consider then $Z = [X,Y] \in \mathfrak{X} \, (M)$. The order ${\rm ord}_p (Z)$ of $Z$ at $p$ is at least $k-1 \geq 2$
which, in turn, ensures that $[Z,Y]=0$. Otherwise the order of $[Z,Y]$ would be at least $k+k-1 -1 \geq k+1$ ($k \geq 3$)
which contradicts the assumption that $m$ is the greatest integer realized as the order at $p$ of a vector field in
$\mathfrak{X} \, (M)$. Since $[Z,Y]=0$, Theorem~~\ref{WeakTheoremA} will again follow unless $Z$ is a constant multiple of~$Y$. Hence,
we only need to discuss this last possibility.

In other words, we assume that $Z=[X,Y]=c Y$ for some $c \in \C^{\ast}$. To obtain a contradiction finishing the proof
of Theorem~\ref{WeakTheoremA}
it suffices to check that $X$ and $Y$ cannot be everywhere parallel. In fact, if $X$ and $Y$ are linearly independent
at generic points of $M$, then Theorem~B implies the existence of a separatrix for $Y$ through~$p$ which is impossible.
Finally, to check that $X$ and $Y$ are not everywhere parallel, suppose for a contradiction that they were parallel at every point
of $M$. Then we can again consider a meromorphic function $f$ such that $X = fY$. However this function $f$ must be
holomorphic around~$p$ since $p$ is an isolated singularity of $Y$ and $X$ is holomorphic.
From this it follows that the order of $X$ at $p$ is greater than
or equal to the order of $Y$ at $p$. A contradiction immediately arises since the former is bounded by~$1$ while the latter
is at least~$3$. The proof of the theorem is completed.
\end{proof}

\section{Proof of Theorem~A}

In this section we shall derive Theorem~A from Theorem~\ref{WeakTheoremA}. In order to do this, we need to study
the situation corresponding to the conclusion of Theorem~\ref{WeakTheoremA} so as to
rule out its existence. The discussion below
relies heavily on the recent work of Guillot \cite{adolfo}.

Therefore we suppose
that $M$ is a compact complex manifold of dimension~$3$ carrying a holomorphic vector field $Y$ which
has an isolated singularity $p \in M$ at which its second jet vanishes. In other words, we have $J^2 (Y) (p) =0$.
We also assume the existence
of another {\it holomorphic vector field}\, $X$ on $M$ having the form $X = fY$ where $f$ is a non-constant meromorphic
first integral of both $X$ and $Y$. Since $X$ is holomorphic, it follows that the divisor of poles of $f$ must be contained
in the divisor of zeros of $Y$ which is therefore non-empty. In any event, we know already that $f$ is holomorphic
on a neighborhood of $p$. Set $f(p) =\lambda \in \C$ and consider the compact (possibly singular) surface $S \subset M$ given
by $S = f^{-1} (\lambda)$. Denote by $Y_{\vert S}$ (resp. $X_{\vert S}$) the restriction of $Y$ (resp. $X$) to~$S$.

\begin{lemma}
\label{finishingjob-1.1}
The surface $S$ has a unique singular point at $p$ and its minimal resolution $\widetilde{S}$ is a Kato surface.
Furthermore the vector field $Y_{\vert S}$ has no singular point other than~$p \in M$.
\end{lemma}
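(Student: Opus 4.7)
The plan is to combine three ingredients: the structural relation $X=fY$ with $f$ a common first integral of both $X$ and $Y$, the vanishing $J^2(Y)(p)=0$ coupled with Lemma~\ref{semicomplete1.0} (which prevents $Y$ from admitting separatrices through~$p$ on the compact manifold $M$), and Guillot's classification of compact surfaces supporting certain vector fields~\cite{adolfo}.

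First I would treat simultaneously the singular locus of $S$ and the zero set of $Y_{\vert S}$. Since $f$ is holomorphic at every $q\in S\setminus\{p\}$ (as $f(q)=\lambda$ is finite) and $f_{\vert S}\equiv\lambda$, the restrictions satisfy $X_{\vert S}=\lambda\, Y_{\vert S}$; hence at smooth points of $S$ the zeros of $Y_{\vert S}$ coincide with the common zero set of $X$ and $Y$, while singular points of $S$ correspond to critical points of $f$. Assuming for contradiction that some $q\in S\setminus\{p\}$ is either a singular point of $S$ or a zero of $Y_{\vert S}$, I would show that the set of such ``bad'' points forms a proper analytic subset of $S$ which is invariant under the flow of $Y$ (thanks to $Y(f)=0$). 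Such a set has positive dimension and therefore contains an analytic curve $\mathcal{C} \subset S$ which is $Y$-invariant in $M$.

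Next I would use the global flow of $Y$ on the compact manifold $M$, together with the fact that $f$ is a first integral, to force $\mathcal{C}$ to accumulate on a zero of $Y$ located on $S$. Since (inductively) $p$ is the only candidate available, this accumulation would yield a germ of separatrix of $Y$ at $p$, contradicting Lemma~\ref{semicomplete1.0} in view of $J^2(Y)(p)=0$. This establishes both that $S$ is smooth away from $p$ and that $Y_{\vert S}$ vanishes only at $p$. One can then pass to the minimal resolution $\widetilde{S}$, on which $Y_{\vert S}$ lifts to a complete holomorphic vector field $\widetilde{Y}_{\vert S}$ whose zeros are confined to the exceptional divisor over~$p$ and whose vanishing order there is controlled by $J^2(Y)(p)=0$.

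To conclude that $\widetilde{S}$ is a Kato surface, I would finally invoke Guillot's classification in \cite{adolfo}: a smooth compact complex surface equipped with a complete holomorphic vector field that has all its zeros concentrated on an exceptional divisor of the required type, and whose order at the isolated contracting singularity is at least~$3$, must be a Kato surface. The main anticipated obstacle is the first step: since $X=fY$, the vector fields $X$ and $Y$ are everywhere parallel on $M$, so Theorem~B does not apply directly and one must instead exploit the global dynamics of $Y$ on $M$ together with the fiber structure of~$f$ in order to transport the local obstruction at $q$ back to~$p$.
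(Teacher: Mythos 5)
Your overall strategy (no separatrix at $p$ via Lemma~\ref{semicomplete1.0}, then Guillot's classification) contains the right ingredients, but the intermediate step you insert --- proving by hand that $S$ is smooth and $Y_{\vert S}$ is regular away from $p$ \emph{before} invoking \cite{adolfo} --- has a genuine gap, in fact two. First, you assert that the ``bad'' locus (singular points of $S$ together with zeros of $Y_{\vert S}$ away from $p$) has positive dimension because it is analytic and invariant under the flow of $Y$; but invariance under a flow does not force positive dimension (isolated fixed points are invariant), so a priori $S$ could have a second isolated singular point $q\neq p$, or $Y_{\vert S}$ an isolated zero at a smooth point $q\neq p$, and your argument produces no curve at all. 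Second, and more seriously, even when the bad locus does contain a curve $\mathcal{C}$, that curve is a compact analytic subset of the compact surface $S$ and there is no mechanism forcing it to ``accumulate'' at $p$: it may simply be a compact curve disjoint from $p$ (for instance a component of the zero divisor of $Y$ meeting $S$ --- recall that this divisor is known to be non-empty since it contains the pole divisor of $f$, and only \emph{after} this lemma does the paper conclude it misses $S$). Your parenthetical ``(inductively) $p$ is the only candidate available'' is circular: that is precisely what is being proved. Since Lemma~\ref{semicomplete1.0} only forbids separatrices \emph{through $p$}, a $Y$-invariant curve located elsewhere on $S$ yields no contradiction.

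The paper's proof avoids all of this by using Guillot's theorem as a global statement about the compact surface $S$: the single input ``$Y_{\vert S}$ is semi-complete (indeed complete) on $S$ and has no separatrix at $p$'' already places one in the setting of \cite{adolfo}, whose conclusion delivers the whole package at once --- the minimal resolution $\widetilde{S}$ is a Kato surface, it carries a unique holomorphic vector field up to scale, and that vector field is regular off the exceptional divisor; the uniqueness of the singular point of $S$ and the regularity of $Y_{\vert S}$ away from $p$ are read off from this, rather than established beforehand. Relatedly, your final formulation of Guillot's criterion (``order at least $3$ at the contracting singularity'') is not the correct hypothesis: the relevant hypothesis is the absence of a separatrix at the singular point of the normal surface germ, which is exactly what Lemma~\ref{semicomplete1.0} provides once one notes that $Y_{\vert S}$, being complete on $S$, is semi-complete near $p$.
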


\begin{proof}
Clearly $Y_{\vert S}$ is complete on~$S$ so that it is also semi-complete on a neighborhood of $p \in S$. Since
$p$ is an isolated singular point of $Y$ (and hence of $Y_{\vert S}$) where the second jet of $Y$ equals zero, it follows
from Lemma~\ref{semicomplete1.0} that the foliation on $S$ associated with $Y_{\vert S}$ has no separatrix at~$p$.
This is exactly the context of Guillot's work \cite{adolfo}. In fact, he shows that the minimal resolution of $S$ is
a Kato surface. Furthermore the Kato surface $\widetilde{S}$ carries a unique holomorphic vector field (represented
by the corresponding transform of $Y_{\vert S}$) and this vector field is regular away from the exceptional divisor. This implies
our lemma.
\end{proof}

In view of the preceding, we shall call $S$ {\it a singular Kato surface}. It is understood that a singular
Kato surface has a unique singular point and admits an ordinary Kato surface as minimal resolution.

Recall that the pole divisor of $f$ is contained in the zero-divisor of $Y$. Since $Y_{\vert S}$ has a unique
singular point (the isolated singularity~$p$), it follows that the zero-divisor of $Y$ does not intersect~$S$.
Therefore the meromorphic function $f$ is actually holomorphic on a neighborhood of $S$ in $M$. Thus, restricted
to a neighborhood $U$ of $S$, $f$ induces a structure of singular fibration in $U$. In fact, the following holds:

\begin{lemma}
\label{finishingjob-2.2}
The function $f$ induces
a proper holomorphic map $F : U \rightarrow \mathbb{D} \subset \C$, where $\mathbb{D}$ denote the unit disc of $\C$,
satisfying the following conditions:
\begin{enumerate}
  \item For every $z \in D \setminus \{ 0 \}$ the fiber $F^{-1} (z)$ over $z$ is a smooth complex surface.
  \item The fiber $F^{-1} (0)$ is a singular Kato surface whose singular point is denoted by $p \in U \subset M$.
  \item The vector field $Y$ is tangent to the fibers of $F$. Furthermore $p$ is the only singular point of $Y$ in $U$.
\end{enumerate}
\end{lemma}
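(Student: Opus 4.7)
The plan is to construct $F$ by restricting $f$ to a tubular neighborhood of $S$ and then to verify the three conditions in turn. The serious content lies in the smoothness of the generic fibers claimed in~(1); conditions~(2) and~(3) together with the properness of~$F$ are essentially bookkeeping built on the preceding discussion.

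To construct $F$, I would choose a relatively compact open neighborhood $V$ of the compact set $S$ in the open set on which $f$ is holomorphic. Since $\overline{V}\setminus V$ is compact and disjoint from $S = f^{-1}(\lambda)$, the quantity $\rho := \min\{|f(q)-\lambda| : q \in \overline{V}\setminus V\}$ is strictly positive. Setting $U := \{q\in V : |f(q)-\lambda| < \rho/2\}$ and defining $F : U \to \mathbb{D}$ by $F(q) := 2(f(q)-\lambda)/\rho$ gives a holomorphic map to the unit disc. A standard sequential compactness argument, noting that sequences in $F^{-1}(K)$ for $K$ compact in $\mathbb{D}$ cannot accumulate on $\partial V$, shows that $F$ is proper. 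By construction $F^{-1}(0) = f^{-1}(\lambda)\cap U = S$, and Lemma~\ref{finishingjob-1.1} identifies $S$ as a singular Kato surface; this is condition~(2). For~(3), the identity $Y(f) = 0$ implies $Y \in \ker dF$, so $Y$ is tangent to the fibers of $F$; and since $p$ is an isolated point of $\{Y=0\}$ with $\{Y=0\}\cap S = \{p\}$ by Lemma~\ref{finishingjob-1.1}, the compactness of $S$ permits shrinking $V$ so that $\{Y=0\}\cap U = \{p\}$.

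For condition~(1), let $C := \{q\in U : dF(q) = 0\}$ denote the critical locus of $F$, which is a closed analytic subset of $U$. Remmert's proper mapping theorem ensures that $F(C)$ is a closed analytic subset of $\mathbb{D}$, while Sard's theorem ensures that it has Lebesgue measure zero in $\mathbb{D}$. A closed analytic subset of the unit disc of measure zero must be discrete; hence $F(C)$ is a discrete subset of $\mathbb{D}$. Since $S$ is singular at $p$, the point $p$ lies in $C$ and $0\in F(C)$; after shrinking $\mathbb{D}$ to a smaller disc about the origin and replacing $U$ by the corresponding preimage, one may arrange $F(C) = \{0\}$. For every $z\in\mathbb{D}\setminus\{0\}$ the map $F$ is then a submersion along $F^{-1}(z)$, and the implicit function theorem yields that $F^{-1}(z)$ is a smooth complex surface.

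The main obstacle is the smoothness claim in~(1). The key technical combination is Remmert's theorem, which supplies analyticity of $F(C)$, paired with Sard's theorem, which forces $F(C)$ to be properly contained in $\mathbb{D}$ and therefore discrete. Once this is in place, shrinking the disc isolates the singular fiber $S$ from all nearby fibers and the remaining verifications fall out of the construction.
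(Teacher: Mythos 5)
Your proof is correct, and the parts concerning the construction of $F$, its properness, and items (2) and (3) match the paper's treatment (the paper also just restricts $f$, invokes Lemma~\ref{finishingjob-1.1} for item~(2), and shrinks $U$ inside $V\cup W$ for item~(3)). Where you genuinely diverge is item~(1). The paper argues geometrically: if the fibers over $z\neq 0$ kept being singular under every shrinking of $U$, the critical locus would be an analytic set of dimension $\geq 1$ meeting $S$ only at $p$; being invariant under the flow of $Y$ (because $f$ is a first integral), a one-dimensional such set would be a separatrix for $Y$ at $p$, which Lemma~\ref{semicomplete1.0} forbids, while a two-dimensional one would cut $S$ along a curve of singular points of $S$, contradicting the fact that $S$ is regular away from $p$. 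You instead run the standard ``generic smoothness'' argument: the critical locus $C$ is closed analytic, $F|_C$ is proper, so Remmert makes $F(C)$ analytic in $\mathbb{D}$, Sard makes it of measure zero, hence it is discrete and a shrinking of the disc isolates $0$. Your route is more robust and uses none of the dynamical input (no-separatrix property, invariance of $C$ under $Y$); it is a general fact about proper holomorphic maps to a disc. What the paper's route buys in exchange is the slightly stronger conclusion that the critical set itself is contained in $\{p\}$ after shrinking (your argument only controls the critical \emph{values}, and a priori leaves room for a positive-dimensional piece of $C$ inside the central fiber $S$); that stronger statement is not part of the lemma, so your proof is complete as it stands, but it is worth being aware of the difference since the subsequent Milnor-fibration discussion implicitly works with an isolated singularity of the central fiber.
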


\begin{proof}
To check the first item, note that this item only makes sense up to reducing $U$. Thus, if the statement were false,
the set of singular points of the fibers $F^{-1} (z)$ would give rise to an irreducible proper analytic subset of $M$
with dimension at least~$1$
intersecting $S = F^{-1} (0)$ at $p$ (since $F$ is a local submersion on $S$ away from~$p$). This analytic set
is clearly invariant by $Y$ so that it would yield a separatrix for $Y$ at $p$ provided that its dimension is equal to~$1$.
This being impossible, we only need to show that the set in question cannot have dimension~$2$ either. For this, just notice
that it would intersect $S$ on a curve which would consists of singular points of~$S$ whereas $S$ is regular away from~$p$.
The final contradiction establishes item~(1) of the statement. Item~(2) follows from Lemma~\ref{finishingjob-1.1}.

Naturally the statement in item~(3) is also to be understood up to reducing~$U$. To check it,
consider a neighborhood $V \subset U$ of $p$ containing no other singular point of $Y$. The existence of $V$ is nothing but
the assumption that $p$ is an isolated singular point for~$Y$. We also know that $Y_{\vert S}$ has no
singular point on the compact set $S \setminus V$. Thus there is a neighborhood $W \subset M$ of
$S \setminus V$ on which $Y$ is regular. It then suffices to choose $U$ contained in the union $V \cup W$.
The proof of the lemma is completed.
\end{proof}

The next step consists of further detailing the structure of the
fiber $F^{-1} (0)$. First recall that the general construction of Kato surfaces can be summarized as follows.
Consider a non-singular surface $\widehat{S}$ along with a divisor $\widehat{D}$ which can be collapsed to
yield a neighborhood of the origin in $\C^2$. In other words, $(\widehat{S} , \widehat{D})$ is obtained by means
of finitely many blow-ups sitting over the origin of $\C^2$. In particular, we have the contraction map $\widehat{\pi} :
(\widehat{S} , \widehat{D}) \rightarrow (\C^2,0)$. Next consider a point $q \in \widehat{D}$ which, for our purposes,
can be chosen as a regular point of $\widehat{D}$. Suppose we have a local holomorphic diffeomorphism
$\widehat{\sigma} : (\C^2,0) \rightarrow (\widehat{S} ,q)$. The pair $(\widehat{\pi}, \widehat{\sigma})$ is said to
be the {\it Kato data}\, of the surface. To obtain a Kato surface from the pair $(\widehat{\pi}, \widehat{\sigma})$,
we choose $\varepsilon >0$ sufficiently small and the manifold with boundary $\widehat{N}$ given by
$$
\widehat{N} = \widehat{\pi}^{-1} (B_{\varepsilon}^4 \cup \Delta_{\varepsilon}^3) \setminus \widehat{\sigma} (B_{\varepsilon}^4)
$$
where $B_{\varepsilon}^4 \subset \C^2$ stands for the open ball around the origin of radius~$\varepsilon$ and where
$\Delta_{\varepsilon}^3 = \partial B_{\varepsilon}^4$ is the boundary of $B_{\varepsilon}^4$. Naturally the superscripts~$3$ and~$4$
are intended to remind us of the corresponding real dimensions and they might be useful to avoid confusion in the subsequent discussion.
The boundary of $\widehat{N}$
has two connected components $\widehat{\pi}^{-1} (\Delta_{\varepsilon}^3)$ and $\widehat{\sigma} (\Delta_{\varepsilon}^3)$ which
can be identified to each other by means of $\widehat{\sigma} \circ \widehat{\pi}$. The quotient of this identification is
a {\it Kato surface}\, which can be made minimal up to contracting all exceptional curves.

Recall that $S =F^{-1} (0)$
has a unique singular point $p$ and it is equipped with a holomorphic vector field $Y_{\vert S}$ having no zeros
away from~$p$. Moreover, $S$ contains no germ of analytic curve passing through~$p$ and invariant under
$Y_{\vert S}$. Denoting by $\widetilde{S}$ the minimal (good) resolution of~$S$,
the main result of \cite{adolfo} can now be rephrased in our context as follows.
\begin{itemize}
  \item $\widetilde{S}$ is a Kato surface and comes equipped with a natural resolution map $\pi_S : \widetilde{S} \rightarrow S$
  which is a diffeomorphism away from $\widetilde{D} = \pi^{-1} (p) \subset \widetilde{S}$.

  \item The vector field $Y_{\vert S}$ lifts to a holomorphic vector field $\widetilde{Y}_{\vert S}$ on $\widetilde{S}$
  whose associated foliation is denoted by $\widetilde{\fol}_{\vert S}$ (note that $\widetilde{Y}_{\vert S}$ possesses a curve
  of zeros).

  \item The irreducible components of the divisor $\widetilde{D} = \pi^{-1} (p)$ are all rational curves and they are invariant
  under $\widetilde{\fol}_{\vert S}$.

  \item The dual graph of $\widetilde{D} = \pi^{-1} (p)$ contains a unique cycle (and it is not reduced to this cycle).
  Furthermore, the only vertices of this graph having valence greater than~$2$ belong to the cycle and their valences are
  exactly~$3$ (see figure \ref{uniquefigure}).
\end{itemize}
As mentioned, we shall say that $S = F^{-1} (0)$ is a {\it singular Kato surface}\, meaning that $S$ and its minimal good
resolution $\widetilde{S}$ satisfying all of the above conditions.

\begin{figure}[ht!]
\includegraphics{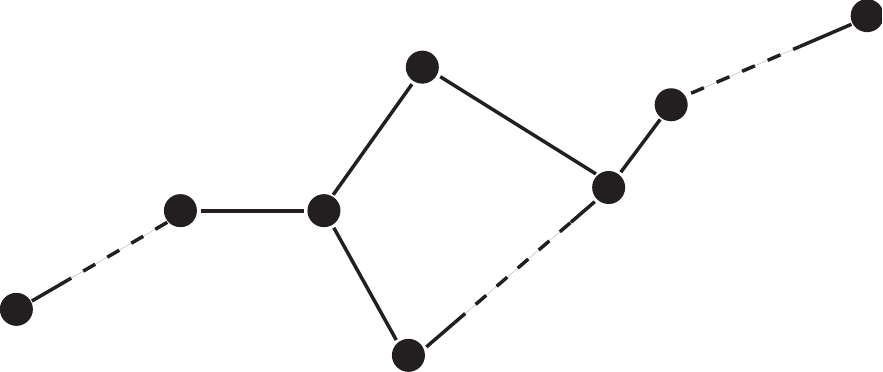}
\caption{Divisor $\widetilde{D}$}\label{uniquefigure}
\end{figure}

On the other hand, it also follows from Lemma~\ref{finishingjob-2.2} that the fiber $F^{-1} (t)$ over $t \in \mathbb{D}^{\ast}$
is a complex surface
equipped with a non-singular holomorphic vector fields $Y_{\vert F^{-1} (t)}$.
According to Mizuhara \cite{Holvf}, see also \cite{fred}, the fiber $F^{-1} (t)$ belongs to the following list:
\begin{enumerate}
  \item A complex torus $\C^2 / \Lambda$;

  \item A flat holomorphic fiber bundle over an elliptic curve;

  \item An elliptic surface without singular fibers or with singular fibers of type
  $mI_0$ only. In other words, the singular fibers are elliptic curve with finite multiplicity;

  \item A Hopf surface;

  \item A positive Inoue surface.
\end{enumerate}
Additional information on each of the possibilities above will be given as they become necessary. Recall that the proof
of Theorem~A is reduced to showing that a fibration $F : U \rightarrow \mathbb{D} \subset \C$ satisfying all the preceding
conditions cannot exist. More precisely, assuming that $S =F^{-1} (0)$ is a singular Kato surface, we are going to show
that the regular fibers $F^{-1} (t)$, $t \neq 0$, cannot belong to the Mizuhara's list (1)---(5) of surfaces equipped with non-singular
vector fields. It is, however, convenient to argue by contradiction. Thus,
we assume aiming at a contradiction that $F : U \rightarrow \mathbb{D} \subset \C$
is such that $S =F^{-1} (0)$ is a singular Kato surface and that the remaining fibers $F^{-1} (t)$ are one of the surfaces in
Mizuhara's list.

Next consider a small open ball $B_{\varepsilon}^6 \subset U$ around $p \in S = F^{-1} (0)$ whose boundary will be denoted by
$\Delta_{\varepsilon}^5$. The ball $B_{\varepsilon}^6$ can be chosen so that its boundary $\Delta_{\varepsilon}^5$
intersects $S$ transversely.
We set $S_{\varepsilon} = B_{\varepsilon}^6 \cap S$ and $S_c = S \setminus (B_{\varepsilon}^6 \cup \Delta_{\varepsilon}^5)$. Thus
both sets $S_{\varepsilon}$ and $S_c$ are open in $S$ and they have the same boundary which coincides with
$S \cap \Delta_{\varepsilon}^5$. Now, we also set $S_{\Delta} = S \cap \Delta_{\varepsilon}^5 = \partial S_{\varepsilon}
= \partial S_c$.

Next note that $S_{\Delta} = S \cap \Delta_{\varepsilon}^5$ is a real $3$-dimensional manifold whose first Betti number equals~$1$
as it follows from the fact that the minimal resolution $\widetilde{S}$ is a Kato surface (cf. the structure of the
divisor $\widetilde{D}$).

Now consider a regular fiber $F_t = F^{-1} (t)$, $t \neq 0$, of $F : U \rightarrow \mathbb{D} \subset \C$. Modulo
choosing $t$ very small, we set $F_{t, \varepsilon} = B_{\varepsilon}^6 \cap F_t$ so that in $B_{\varepsilon}^6$ we obtain the
classical situation associated with Milnor's fibration theorem \cite{milnor}. In particular the surfaces
$F_{t, \varepsilon}$ are connected and simply connected; see \cite{milnor}. Next we denote by
$F_{t,\Delta}$ the intersection of $F_t$ with $\Delta_{\varepsilon}^5$. Finally $F_{t,c}$ will denote the open
set of $F_t$ given by $F_{t,c} = F_t \setminus (F_{t, \varepsilon} \cup F_{t,\Delta})$.

\begin{lemma}
\label{finishingjob-3.3}
The first Betti number of the fiber $F_t = F^{-1} (t)$ is at most one provided that $t\neq 0$ is small enough.
\end{lemma}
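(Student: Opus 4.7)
The plan is to compute $H_1(F_t)$ via the Mayer-Vietoris sequence for the decomposition $F_t = F_{t,\varepsilon} \cup F_{t,c}$ (after slightly thickening each piece so that their overlap deformation retracts to $F_{t,\Delta}$). Two geometric inputs suffice. First, by Milnor's fibration theorem applied to the holomorphic map $F$ at the isolated critical point $p \in S = F^{-1}(0)$, the local Milnor fiber $F_{t,\varepsilon}$ is connected and simply connected for $t \neq 0$ sufficiently small, so $H_1(F_{t,\varepsilon}) = 0$. Second, Ehresmann's theorem applied to the restriction of $F$ to $F^{-1}(\mathbb{D}) \setminus B_\varepsilon^6$ --- a proper submersion since Lemma~\ref{finishingjob-2.2} ensures that $p$ is the only critical point of $F$ in $U$ --- yields (for $|t|$ small enough) simultaneous diffeomorphisms $F_{t,c} \cong S_c$ and $F_{t,\Delta} \cong S_\Delta$ coming from a single local trivialization over a sub-disc of $\mathbb{D}$.

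Substituting these inputs into the Mayer-Vietoris sequence, and using that $S_\Delta$ is connected to kill the $\widetilde{H}_0$ term, one obtains the exact sequence
$$
H_1(S_\Delta) \longrightarrow H_1(S_c) \longrightarrow H_1(F_t) \longrightarrow 0 \, ,
$$
so that $b_1(F_t) \leq b_1(S_c)$. The problem is thus reduced to establishing the bound $b_1(S_c) \leq 1$.

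For this, I apply a second Mayer-Vietoris to the decomposition $\widetilde{S} = S_c \cup \widetilde{N}(\widetilde{D})$ of the minimal resolution $\widetilde{S}$, where $\widetilde{N}(\widetilde{D})$ denotes a tubular neighborhood of the exceptional divisor which deformation retracts to $\widetilde{D}$ and whose boundary is identified with $S_\Delta$ through the resolution map $\pi_S$ (which is a diffeomorphism off $\widetilde{D}$). The resulting exact sequence
$$
H_1(S_\Delta) \longrightarrow H_1(S_c) \oplus H_1(\widetilde{D}) \longrightarrow H_1(\widetilde{S}) \longrightarrow 0
$$
combined with $b_1(\widetilde{S}) = 1$ (a standard property of Kato surfaces) and $b_1(\widetilde{D}) = 1$ (from the unique cycle in the dual graph displayed in Figure~\ref{uniquefigure}) forces $b_1(S_c) + 1 - r = 1$, where $r \leq \mathrm{rank}\, H_1(S_\Delta) = 1$ is the rank of the image of $H_1(S_\Delta)$. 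Therefore $b_1(S_c) = r \leq 1$, whence $b_1(F_t) \leq 1$ and the lemma follows.

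The main obstacle is making sure that the two Mayer-Vietoris computations are set up with compatible inclusion maps $H_1(S_\Delta) \to H_1(S_c)$, so that the reduction from the first sequence to the second is unambiguous. This is automatic once one fixes collars: in both sequences the relevant map is induced by the natural inclusion $S_\Delta \hookrightarrow S_c$, and Ehresmann's trivialization preserves the inclusion $F_{t,\Delta} \hookrightarrow F_{t,c}$, so the bound $b_1(F_t) \leq b_1(S_c) \leq 1$ transfers without loss.
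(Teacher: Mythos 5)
Your argument is correct and follows essentially the same route as the paper: decompose $F_t$ into the local Milnor fiber $F_{t,\varepsilon}$ (simply connected) and the complement $F_{t,c}$ (identified with $S_c$ -- you via Ehresmann, the paper via finiteness of the holonomy of the leaf $S\setminus\{p\}$), then conclude by Mayer--Vietoris. Your second Mayer--Vietoris on $\widetilde{S}=S_c\cup\widetilde{N}(\widetilde{D})$, giving $b_1(S_c)\leq 1$ from $b_1(\widetilde{S})=b_1(\widetilde{D})=b_1(S_\Delta)=1$, is a welcome explicit justification of the step the paper leaves implicit in its ``simple application of Mayer--Vietoris''.
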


\begin{proof}
First we recall that the first Betti number of a Kato surface is equal to~$1$. Now, by
considering the fibration induced by $F$ as a foliation, it is clear that the holonomy associated to the leaf
$S\setminus \{ p\}$ is finite. Thus, the manifold with boundary $F_{t,c} \cup F_{t,\Delta}$ is a finite covering
of $S_c \cup S_{\Delta}$.

The preceding implies that the first Betti number of $F_{t,\Delta}$ equals the first Betti number of
$S_{\Delta}$, namely~$1$. Similarly, the first Betti number of $F_{t,c}$ equals the first Betti number of
$S_c$. Since, in turn, $F_{t, \epsilon}$ is simply connected, the lemma follows from a simple
application of Mayer-Vietoris argument.
\end{proof}

Lemma~\ref{finishingjob-3.3} implies that the fibers $F_t$, $t\neq 0$, cannot be as in items~(1)---(3) of
Mizuhara's list. Thus, in order to finish the proof of Theorem~A, we just need to investigate the case
where the fibers $F_t$ are Hopf surfaces and the case where they are positive Inoue surfaces.

First recall that a {\it Hopf surface}\, is a surface obtained as the quotient of $\C^2 \setminus \{ (0,0) \}$ by
a free action of a discrete group. Similarly, a {\it positive Inoue surface}\, is obtained as the quotient
of $\mathbb{D} \times \C$ by a discrete, cocompact group of automorphisms having the form
$(w,z) \mapsto (\gamma (w) , z + a (w))$ where $\mathbb{D}$ denotes the hyperbolic disc, $\gamma$ is an isometry
of $\mathbb{D}$, and $a$ is a holomorphic function on $\mathbb{D}$. The reader will note that
all these surfaces have second Betti number equal to zero.

\begin{proof}[Proof of Theorem A]
Consider again the fiber $F_t$ and its subsets $F_{t, \epsilon}$, $F_{t,\Delta}$, and $F_{t,c}$. Note also
that the second homology group of $F_{t, \epsilon}$ is generated by the {\it vanishing cycles}\, and it is free abelian
on at least one generator; see \cite{milnor}. Since the second Betti number of $F_t$ vanishes, the Mayer-Vietoris
sequence yields
$$
H_3 (F_t) \longrightarrow H_2 (F_{t,\Delta}) \stackrel{(i_{\ast}, j_{\ast})}\longrightarrow H_2 (F_{t, \epsilon}) \oplus H_2 (F_{t,c})
\longrightarrow 0 \, .
$$
Note that the second Betti number of $F_{t,\Delta}$ is one by Poincar\'e duality. On the other
hand, the homomorphism $(i_{\ast}, j_{\ast})$ is onto $H_2 (F_{t, \epsilon}) \oplus H_2 (F_{t,c})$. Hence
the rank of $H_2 (F_{t, \epsilon}) \oplus H_2 (F_{t,c})$ must be~$1$ since it is necessarily strictly positive.
In other words, the {\it Milnor number}\, of the singular surface $F^{-1} (0)$ is one; see \cite{milnor}.

To derive a final contradiction proving Theorem~A, it suffices to check that the Milnor number $\mu$ associated
with the singular point $p \in S=F^{-1} (0)$ must be strictly greater than one. A short argument in this direction
requires Laufer's formula in \cite{laufer} stating that
\begin{equation}
1+ \mu = \mathcal{E}_{\rm top} \, (\widetilde{D}) + K.K + 12 \, {\rm dim}_{\C} \, H^1 (\widetilde{S} , \mathcal{O}) \, , \label{laufer}
\end{equation}
where $\mathcal{E}_{\rm top} \, (\widetilde{D})$ stands for the topological Euler characteristic of the exceptional
divisor $\widetilde{D}$ and where $K.K$ is the self-intersection of the canonical class in the resolution $\widetilde{S}$
of $S$. On the other hand, recalling that the {\it holomorphic Euler characteristic}\, $\mathcal{X} (\widetilde{S} )$
of $\widetilde{S}$ is defined by $\mathcal{X} (\widetilde{S} ) = 1 - {\rm dim}_{\C} \, H^1 (\widetilde{S} , \mathcal{O})
+{\rm dim}_{\C} \, H^2 (\widetilde{S} , \mathcal{O})$, Noether formula yields
$$
K.K = 12 \, \mathcal{X} \, (\widetilde{S} ) - \mathcal{E}\, (\widetilde{S})
$$
where $\mathcal{E}\, (\widetilde{S})$ is the usual Euler characteristic of the real four dimensional manifold
$\widetilde{S}$; see \cite{friedman}, pages~8 and~9. There follows that
$$
K.K + 12 \, {\rm dim}_{\C} \, H^1 (\widetilde{S} , \mathcal{O}) = 12 + 12 \, {\rm dim}_{\C} \, H^2 (\widetilde{S} , \mathcal{O})
- \mathcal{E} \, (\widetilde{S}) \, .
$$
Combined to Formula~(\ref{laufer}), the preceding equation yields
$$
1+ \mu = 12 + 12 \, {\rm dim}_{\C} \, H^2 (\widetilde{S} , \mathcal{O}) + \mathcal{E}_{\rm top} \, (\widetilde{D}) -
\mathcal{E} \, (\widetilde{S}) \, .
$$
Thus, to finish the proof of Theorem~A it is enough to check that $\mathcal{E}_{\rm top} \, (\widetilde{D}) \geq
\mathcal{E} \, (\widetilde{S})$. This is however easy: since the first Betti number of $\widetilde{S}$ equals~$1$,
Poincar\'e duality ensures that $\mathcal{E} \, (\widetilde{S})$ is equal to the second Betti number of $\widetilde{S}$.
In turn, this second Betti number is nothing but the number of irreducible components in the exceptional divisor
$\widetilde{D}$; see \cite{adolfo}. Again, the description of the exceptional divisor $\widetilde{D}$ shows
that $\mathcal{E}_{\rm top} (\widetilde{D})$ also equals the number of connected components in $\widetilde{D}$. The
proof of Theorem~A is completed.
\end{proof}

\bigskip

\noindent {\bf Acknowledgements}. The presentation of this paper benefited greatly from comments and
questions by A. Lins-Neto, J.V. Pereira and P. Sad to whom we are mostly grateful. We also thank
F. Bosio for providing us with accurate information about complex surfaces and non-singular vector fields.

This research was supported by FCT through project EXPL/MAT-CAL/1575/2013.
The second author was also partially supported by FCT through CMUP.

\bigskip

\begin{flushleft}
{\sc Julio Rebelo} \\
Institut de Math\'ematiques de Toulouse ; UMR 5219\\
Universit\'e de Toulouse\\
118 Route de Narbonne\\
F-31062 Toulouse, FRANCE.\\
rebelo@math.univ-toulouse.fr

\end{flushleft}

\bigskip

\begin{flushleft}
{\sc Helena Reis} \\
Centro de Matem\'atica da Universidade do Porto, \\
Faculdade de Economia da Universidade do Porto, \\
Portugal\\
hreis@fep.up.pt \\

\end{flushleft}

\end{document}